\title[Symmetric amenability for $\mathrm{C}^*$-algebras]{Dixmier approximation and symmetric amenability for $\mathrm{C}^*$-algebras}
\author{Narutaka Ozawa}
\address{RIMS, Kyoto University, \mbox{606-8502} Kyoto, Japan}
\email{narutaka@kurims.kyoto-u.ac.jp}
\thanks{Partially supported by JSPS (23540233)}
\subjclass{46L05; 46L10}
\keywords{Dixmier approximation, symmetric amenability, continuous bundles of von Neumann algebras}
\date{\today}
\newtheorem{thm}{Theorem}
\newtheorem{cor}[thm]{Corollary}
\newtheorem{lem}[thm]{Lemma}
\theoremstyle{definition}
\newcommand{\IB}{{\mathbb B}}
\newcommand{\IC}{{\mathbb C}}
\newcommand{\IM}{{\mathbb M}}
\newcommand{\IN}{{\mathbb N}}
\newcommand{\cO}{{\mathcal O}}
\newcommand{\cR}{{\mathcal R}}
\newcommand{\cU}{{\mathcal U}}
\newcommand{\cZ}{{\mathcal Z}}
\newcommand{\fC}{{\mathfrak C}}
\newcommand{\ru}{{\mathrm u}}
\newcommand{\e}{\varepsilon}
\newcommand{\p}{\varphi}
\DeclareMathOperator{\conv}{conv}
\DeclareMathOperator{\mul}{m}
\DeclareMathOperator{\tr}{tr}
\DeclareMathOperator{\ctr}{ctr}
\DeclareMathOperator{\Res}{Res}
\DeclareMathOperator{\ev}{ev}
\DeclareMathOperator{\Ad}{Ad}
\DeclareMathOperator{\prob}{{\mathcal M}_+^1}
\DeclareMathOperator{\Aff}{Aff}
\DeclareMathOperator{\cAff}{{\mathcal A}ff}
\newcommand{\st}{\mathrm{st}}
\newcommand{\alg}{\IC}
\begin{document}
\begin{abstract}
We study some general properties of tracial $\mathrm{C}^*$-algebras.
In the first part, we consider Dixmier type approximation theorem and
characterize symmetric amenability for $\mathrm{C}^*$-algebras.
In the second part, we consider continuous bundles of tracial
von Neumann algebras and classify some of them.
\end{abstract}
\maketitle
\section{Introduction}
The general study of tracial states on $\mathrm{C}^*$-alge\-bras has a long history,
but recently it gained a renewed interest in connection with the
ongoing classification program for finite nuclear $\mathrm{C}^*$-alge\-bras.
In this note, we record several facts about tracial $\mathrm{C}^*$-alge\-bras which
may be useful in the future study.
The results are two-fold.
First, we consider Dixmier type approximation property for $\mathrm{C}^*$-alge\-bras
and relate it to symmetric amenability.
The Dixmier approximation theorem (Theorem III.5.1 in \cite{dixmier}) states
a fundamental fact about von Neumann algebras
that for any von Neumann algebra $N$ and any element $a\in N$, the norm-closed
convex hull of $\{ uau^* : u \in \cU(N)\}$ meets the
center $\cZ(N)$ of $N$. Here $\cU(N)$ denotes the unitary group of $N$.
If $N$ is moreover a finite von Neumann algebra,
then this intersection is a singleton and consists of $\ctr(a)$.
Here $\ctr$ denotes the center-valued trace, which is the unique conditional
expectation from $N$ onto $\cZ(N)$ that satisfies $\ctr(xy)=\ctr(yx)$.
It is proved by Haagerup and Zsido (\cite{hz}) that the Dixmier approximation
theorem holds for simple $\mathrm{C}^*$-alge\-bras having at most one tracial states
(and obviously does not for simple $\mathrm{C}^*$-alge\-bras having more than one tracial states).
Recall that a $\mathrm{C}^*$-alge\-bra $A$ has the \emph{quotient tracial state property} (QTS property)
if every non-zero quotient $\mathrm{C}^*$-alge\-bra of $A$ has a tracial state (\cite{murphy}).
We denote by $T(A)$ the space of the tracial states on $A$,
equipped with the weak$^*$-topology.

\begin{thm}\label{thm:qts}
For a unital $\mathrm{C}^*$-alge\-bra $A$, the following are equivalent.
\begin{enumerate}[$(\rm i)$]
\item\label{con:qts}
The $\mathrm{C}^*$-alge\-bra $A$ has the QTS property.
\item\label{con:dix}
For every $\e>0$ and $a\in A$ that satisfy $\sup_{\tau\in T(A)}|\tau(a)|<\e$,
there are $k$ and $u_1,\ldots,u_k\in\cU(A)$ such that
$\| \frac{1}{k}\sum_{i=1}^k u_iau_i^* \|<\e$.
\end{enumerate}
\end{thm}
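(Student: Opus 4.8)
The plan is to treat the two implications separately, handling $(\mathrm{ii})\Rightarrow(\mathrm i)$ by a direct construction and $(\mathrm i)\Rightarrow(\mathrm{ii})$ by a minimax reduction to a single self-adjoint estimate. Throughout I write $K_a=\overline{\conv}\{uau^*:u\in\cU(A)\}$ for the norm-closed convex hull of the unitary orbit. Since $\tau(uau^*)=\tau(a)$, every $x\in K_a$ satisfies $\tau(x)=\tau(a)$, so $\|x\|\ge\sup_{\tau\in T(A)}|\tau(a)|$ for all $x\in K_a$; this already gives one inequality for free, and (ii) is the assertion that this lower bound is attained in the limit.

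For $(\mathrm{ii})\Rightarrow(\mathrm i)$ I would argue by contraposition. If $A$ fails QTS there is a proper closed ideal $I$ with $B=A/I\neq0$ carrying no tracial state. The standard fact that a unital C*-algebra has no tracial state exactly when $-1$ lies in the closed real cone $\overline{B_+ + [B,B]_{\mathrm{sa}}}$ (a Hahn--Banach separation whose separating functional is a trace) produces $p\ge 0$ and a finite self-adjoint sum of commutators $k$ with $\|1+p+k\|<\tfrac12$; hence $b:=-k\in[B,B]_{\mathrm{sa}}$ satisfies $b\ge\tfrac12$. Lifting the entries of these commutators to $A$ and taking the self-adjoint part (the commutator subspace is self-adjoint) yields $a_0\in[A,A]_{\mathrm{sa}}$ with $\pi(a_0)=b$, where $\pi\colon A\to B$ is the quotient map. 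Then $\tau(a_0)=0$ for every $\tau\in T(A)$, while for any average $x\in K_{a_0}$ one has $\pi(x)\ge\tfrac12$, so $\|x\|\ge\|\pi(x)\|\ge\tfrac12$. Thus $\sup_\tau|\tau(a_0)|=0$ yet no average of unitary conjugates of $a_0$ has norm $<\tfrac12$, contradicting (ii) with $\e=\tfrac12$.

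For $(\mathrm i)\Rightarrow(\mathrm{ii})$ the heart is the self-adjoint identity
\[
\beta(h):=\inf_{x\in K_h}\lambda_{\max}(x)=\max_{\tau\in T(A)}\tau(h)\qquad(h=h^*\in A),
\]
where $\lambda_{\max}$ is the top of the spectrum. Since $\lambda_{\max}(x)=\sup_\phi\phi(x)$ over states and $K_h$ is convex, the Ky~Fan minimax gives $\beta(h)=\sup_\phi\inf_{u}\phi(uhu^*)$, and restricting the supremum to traces yields $\beta(h)\ge\max_\tau\tau(h)$; the content is the reverse inequality. Granting it, both ends of the spectrum can be pushed to the trace values: first average to get $x_1$ with $\lambda_{\max}(x_1)\le\max_\tau\tau(h)+\e$, then average $x_1$ to raise its bottom to $\min_\tau\tau(h)-\e$ without increasing the top, so $\|x_2\|\le\sup_\tau|\tau(h)|+\e$. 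The general non-self-adjoint case follows from a second minimax: writing $\|x\|=\sup\{\mathrm{Re}\,\omega(x):\|\omega\|\le1\}$ one obtains $\inf_{x\in K_a}\|x\|=\sup_{\omega}\inf_u\mathrm{Re}\,\omega(uau^*)$, and evaluating at extreme functionals $\omega=\zeta\phi$ reduces each inner quantity to the self-adjoint estimate applied to $\mathrm{Re}(\zeta a)$, giving $\inf_{x\in K_a}\|x\|=\sup_{|\zeta|=1}\sup_\tau\mathrm{Re}(\zeta\tau(a))=\sup_\tau|\tau(a)|$, which is exactly (ii).

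It remains to prove $\beta(h)\le\max_\tau\tau(h)$ using QTS, and this is where I expect the main difficulty. Suppose not: some state $\phi$ satisfies $\phi(uhu^*)\ge\delta$ for all $u$, with $\delta>\max_\tau\tau(h)$. Passing to the universal von Neumann algebra $A^{**}$, on which $\phi$ and every trace are normal, the Dixmier approximation theorem supplies a central element $z\in\overline{\conv}^{\,w*}\{uhu^*\}\cap\cZ(A^{**})$. Normality gives $\phi(z)\ge\delta$, whereas for every $\tau\in T(A)$ the normal extension is tracial, so $\tau(z)=\tau(h)\le\max_\tau\tau(h)<\delta$. On the finite summand of $A^{**}$ the element $z$ is the center-valued trace $\ctr$ of the compression of $h$, and there the normal traces separate the center, forcing $z$ below $(\max_\tau\tau(h))\cdot 1$; hence the spectral projection $e=\mathbf 1_{(\max_\tau\tau(h),\infty)}(z)$, which is nonzero since $\phi(z)>\max_\tau\tau(h)$, is dominated by the \emph{properly infinite} part of $A^{**}$. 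The plan is to extract from $e$ a nonzero quotient of $A$ whose bidual is properly infinite, hence traceless, contradicting QTS. The honest obstacle is precisely this last step: the finite/infinite central splitting of $A^{**}$ is not a priori open, so a central projection of $A^{**}$ need not arise from an ideal of $A$, and one must argue carefully---this is the crux---that the $\phi$-mass forced onto the properly infinite part cannot be reconciled with the existence of a trace on every quotient. Controlling this interface between the von Neumann center-valued trace on the finite part and the C*-level quotient traces furnished by QTS is the step I expect to require the most work.
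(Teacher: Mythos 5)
Your proof of $(\mathrm{ii})\Rightarrow(\mathrm{i})$ is correct and is a legitimate alternative to the paper's: the paper instead takes an approximate unit $(e_n)$ of the offending ideal $I$, uses Dini's theorem to find $q=1-e_n$ with $\tau(q)<1/2$ for all $\tau\in T(A)$, and derives a contradiction from $\frac{1}{k}\sum_i u_iqu_i^*\in 1+I$; your route through the Hahn--Banach/Cuntz--Pedersen characterization of trace-less algebras and lifting of commutators is heavier but sound.

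The direction $(\mathrm{i})\Rightarrow(\mathrm{ii})$, however, has a genuine gap, and it is exactly the one you flag yourself: you never prove $\beta(h)\le\max_\tau\tau(h)$, i.e., you never control the part of $a$ sitting over the properly infinite summand $A^{**}_\infty$ of the bidual. This is the entire content of the implication, and the obstruction you isolate (a state giving mass to a central projection of $A^{**}_\infty$ that need not come from an ideal of $A$) is real and is not resolved by your sketch. The paper's resolution is Halpern's theorem (Theorem 4.12 in \cite{halpern}): for a properly infinite von Neumann algebra with center $Z$ and strong radical $I$, and for \emph{any} unital positive $Z$-linear map $\p$ with $\p(I)=0$, the element $\p(x)$ lies in the norm-closed convex hull of $\{uxu^*\}$. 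The QTS property enters precisely to manufacture such a $\p$ with $\|\p(\pi_\infty(a))\|\le\sup_\tau|\tau(a)|$: for each finite central partition of unity $\{p_i\}$ one applies QTS to the quotient $p_i((\pi_\infty(A)+I)/I)$ to get a tracial state $\tau_i$ with $\tau_i(I)=0$ and $|\tau_i(\pi_\infty(a))|\le\e_0$, sets $\p_\lambda(x)=\sum_i\tilde\tau_i(x)p_i$ for state extensions $\tilde\tau_i$, and takes a limit point over partitions. Without this (or an equivalent device) your argument does not close. A secondary problem: your reduction of general $a$ to the self-adjoint case via ``extreme functionals $\omega=\zeta\phi$'' fails, both because the extreme points of the unit ball of $A^*$ are not scalar multiples of states when $A$ is noncommutative, and because the supremum of the \emph{concave} function $\omega\mapsto\inf_u\mathrm{Re}\,\omega(uau^*)$ over a compact convex set need not be attained at an extreme point. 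The paper avoids this reduction entirely, since Dixmier's and Halpern's theorems both apply to arbitrary, not necessarily self-adjoint, elements.
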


Unlike the case for von Neumann algebras, there is no bound of $k$ in terms of $\e$ and $\|a\|$
that works for an arbitrary element $a$ in a $\mathrm{C}^*$-alge\-bra{} (see Section \ref{sec:com}, where
we study a relation between trace zero elements and commutators).
Recall that a Banach algebra $A$ is said to be \emph{amenable} if
there is a net $(\Delta_n)_n$, called an approximate diagonal, in the algebraic
tensor product $A\otimes_\alg A$ (we reserve the symbol $\otimes$ for the minimal tensor product) such that
\begin{enumerate}[$(1)$]
\item $\sup_n\|\Delta_n\|_{\wedge}<+\infty$,
\item $(\mul(\Delta_n))_n$ is an approximate identity,
\item $\lim_n\| a\cdot\Delta_n - \Delta_n\cdot a\|_{\wedge}=0$ for every $a\in A$.
\end{enumerate}
Here $\|\,\cdot\|_{\wedge}$ is the projective norm on $A\otimes_\alg A$,
$\mul\colon A\otimes_\alg A\to A$ is the multiplication, and
$a\cdot(\sum_i x_i\otimes y_i)=\sum_i ax_i\otimes y_i$ and
$(\sum_i x_i\otimes y_i)\cdot a=\sum_i x_i\otimes y_ia$.
The celebrated theorem of Connes--Haagerup (\cite{connes,haagerup}) states
that a $\mathrm{C}^*$-alge\-bra $A$ is amenable as a Banach algebra if and only if it is nuclear.
The Banach algebra $A$ is said to be \emph{symmetrically amenable} (\cite{johnson}) if
the approximate diagonal $(\Delta_n)_n$ can be taken symmetric under the
flip $x\otimes y\to y\otimes x$. We characterize symmetric amenability
for $\mathrm{C}^*$-alge\-bras.

\begin{thm}\label{thm:symame}
For a unital $\mathrm{C}^*$-alge\-bra $A$, the following are equivalent.
\begin{enumerate}[$(\rm i)$]
\item\label{con:nucqts}
The $\mathrm{C}^*$-alge\-bra $A$ is nuclear and has the QTS property.
\item\label{con:symame0}
The $\mathrm{C}^*$-alge\-bra $A$ has an approximate diagonal $\Delta_n=\sum_{i=1}^{k(n)} x_i(n)^*\otimes x_i(n)$
such that $\lim_n\sum_{i=1}^{k(n)}\|x_i(n)\|^2=1$, $\mul(\Delta_n)=1$, and
$\lim_n\|1-\sum_{i=1}^{k(n)} x_i(n)x_i(n)^*\|=0$.
\item\label{con:symame}
The $\mathrm{C}^*$-alge\-bra $A$ is symmetrically amenable.
\item\label{con:symame1}
The $\mathrm{C}^*$-alge\-bra $A$ has a symmetric approximate diagonal $(\Delta_n)_n$ in
\[
\{ \sum_i x_i^*\otimes x_i \in A\otimes_\alg A : \sum_i \|x_i\|^2\le 1\}.
\]
\end{enumerate}
\end{thm}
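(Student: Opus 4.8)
The plan is to run the cycle
$(\ref{con:nucqts})\Rightarrow(\ref{con:symame0})\Rightarrow(\ref{con:symame1})\Rightarrow(\ref{con:symame})\Rightarrow(\ref{con:nucqts})$,
isolating the construction $(\ref{con:nucqts})\Rightarrow(\ref{con:symame0})$ as the only hard step. First the trivialities. For $(\ref{con:symame1})\Rightarrow(\ref{con:symame})$ note that any $\Delta=\sum_i x_i^*\otimes x_i$ with $\sum_i\|x_i\|^2\le 1$ has $\|\Delta\|_\wedge\le\sum_i\|x_i^*\|\,\|x_i\|\le 1$, so a symmetric approximate diagonal lying in that set is in particular a symmetric approximate diagonal.

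For $(\ref{con:symame0})\Rightarrow(\ref{con:symame1})$ I would symmetrize. Write $\sigma$ for the flip $x\otimes y\mapsto y\otimes x$, which is an isometry of $(A\otimes_\alg A,\|\,\cdot\|_\wedge)$. The key bookkeeping observations are that the class $\{\sum_i x_i^*\otimes x_i\}$ is stable under $\sigma$, since $\sigma(\sum_i x_i^*\otimes x_i)=\sum_i x_i\otimes x_i^*=\sum_i y_i^*\otimes y_i$ with $y_i=x_i^*$ and $\|y_i\|=\|x_i\|$; that $\sigma$ preserves the approximate‑diagonal property, because $\sigma(a\cdot\Delta-\Delta\cdot a)=\sigma(\Delta)\cdot a-a\cdot\sigma(\Delta)$; and that $\mul(\sigma\Delta_n)=\sum_i x_i(n)x_i(n)^*\to 1$ by hypothesis. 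Hence $\tfrac12(\Delta_n+\sigma\Delta_n)$ is a \emph{symmetric} approximate diagonal, again of the special form, with the same value of $\sum_i\|\cdot\|^2\to 1$; a rescaling by $(\sum_i\|x_i(n)\|^2)^{-1/2}\to 1$ then places it inside the unit‑ball set of $(\ref{con:symame1})$.

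For $(\ref{con:symame})\Rightarrow(\ref{con:nucqts})$ the nuclearity half is immediate: a symmetric approximate diagonal is an approximate diagonal, so $A$ is amenable and hence nuclear by Connes--Haagerup. For the QTS property, since a symmetric approximate diagonal descends to every quotient, it suffices to produce a tracial state on an arbitrary nonzero symmetrically amenable unital $A$. Applying the slice map $u\otimes v\mapsto ubv$ to $a\cdot\Delta_n-\Delta_n\cdot a$ shows that, after passing to the form $(\ref{con:symame1})$, the unital completely positive maps $\Phi_n(b)=\sum_i x_i(n)^*b\,x_i(n)$ become asymptotically central, i.e.\ $\|a\Phi_n(b)-\Phi_n(b)a\|\to 0$ for all $a,b$; flip‑symmetry gives in addition $\sum_i x_i^*b\,x_i=\sum_i x_i b\,x_i^*$, which forces any point‑weak$^*$ limit $\Phi_\infty$ into $\cZ(A^{**})$ to be tracial. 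Composing $\Phi_\infty$ with a state on the centre yields the desired tracial state, so $A$ has the QTS property.

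The substance is $(\ref{con:nucqts})\Rightarrow(\ref{con:symame0})$. Using nuclearity I would fix unital completely positive maps $\phi\colon A\to\IM_k$ and $\psi\colon\IM_k\to A$ with $\psi\phi\approx\id$ on a finite set, and on $\IM_k$ take the symmetric diagonal presented through a Weyl unitary basis $\{w_j\}_{j=1}^{k^2}$: setting $x_j=\tfrac1k w_j$ gives $\sum_j x_j^*\otimes x_j$ equal to the canonical diagonal, with $\mul=1$ and $\sum_j\|x_j\|^2=1$ since $\|w_j\|=1$. Pushing forward by $\psi$ produces $\tilde x_j=\psi(\tfrac1k w_j)$, an approximate special‑form diagonal on $A$ with $\sum_j\|\tilde x_j\|^2\le 1$, but with $\mul$ and $\sum_j\tilde x_j\tilde x_j^*$ only approximately $1$. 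The remaining task is to restore, simultaneously, the exact identity $\mul(\Delta_n)=1$, the condition $\sum_i x_i x_i^*\to 1$, and the tightness $\sum_i\|x_i\|^2\to 1$; this is the main obstacle. I expect QTS to enter here exactly through Theorem~\ref{thm:qts}: the error $\mul(\tilde\Delta)-1$ is self‑adjoint with $\sup_{\tau\in T(A)}|\tau(\cdot)|$ small, so by the Dixmier approximation it can be driven to small norm by averaging unitary conjugates, and this averaging is what converts the operator‑norm excess in $\sum_i\|x_i\|^2$ into a purely tracial quantity that can be annihilated while keeping $\sum_i\|x_i\|^2$ close to its trace value $1$. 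It is precisely this flattening that fails without a trace—for instance for $\cO_2$, where $\sum_i\|x_i\|^2\to 1$ is unattainable—so QTS is used in an essential way.
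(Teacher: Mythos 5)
Your overall cycle and your treatment of $(\rm{iv})\Rightarrow(\rm{iii})$ and $(\rm{iii})\Rightarrow(\rm{i})$ match the paper, but two of your steps have genuine problems. The more serious one is $(\rm{ii})\Rightarrow(\rm{iv})$: the identity $\sigma(a\cdot\Delta-\Delta\cdot a)=\sigma(\Delta)\cdot a-a\cdot\sigma(\Delta)$ is false. With the module actions $a\cdot(x\otimes y)=ax\otimes y$ and $(x\otimes y)\cdot a=x\otimes ya$, one computes $\sigma\bigl(a\cdot(x\otimes y)-(x\otimes y)\cdot a\bigr)=y\otimes ax-ya\otimes x$, whereas $\sigma(x\otimes y)\cdot a-a\cdot\sigma(x\otimes y)=y\otimes xa-ay\otimes x$; these agree only when $a$ is central. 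What the flip actually does is turn an approximate diagonal for $A$ into one for the opposite algebra $A^{\mathrm{op}}$, so asymptotic centrality of $\sigma(\Delta_n)$ over $A$ does not follow, and $\tfrac12(\Delta_n+\sigma\Delta_n)$ is not known to be an approximate diagonal. This is not a removable blemish: if naive flip-averaging worked, every nuclear $\mathrm{C}^*$-alge\-bra would be symmetrically amenable, contradicting the theorem (e.g.\ $\cO_2$). The paper symmetrizes differently: writing $\Delta_n=\sum_i y_i^*\otimes y_i$, it passes to $\Delta^{\sharp}_n=(\sum_i\|y_i\|^2)^{-2}\sum_{i,j} y_i^*y_j\otimes y_j^*y_i$, which is manifestly invariant under the flip (swap $i$ and $j$), stays in the set of $(\rm{iv})$ because $\sum_{i,j}\|y_i^*y_j\|^2\le(\sum_i\|y_i\|^2)^2$, and inherits the approximate-diagonal properties from $\Delta_n$ together with the extra conditions $\sum_i y_i^*y_i=1$, $\sum_i y_iy_i^*\to1$, $\sum_i\|y_i\|^2\to1$ of $(\rm{ii})$. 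You need some such ``squaring'' device here; averaging with the flip does not do the job.

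The second gap is that $(\rm{i})\Rightarrow(\rm{ii})$ is only a sketch, and the sketch starts from the wrong place. Building the diagonal by pushing the Weyl diagonal of $\IM_k$ through a u.c.p.\ factorization $\psi\circ\phi\approx\id$ does not by itself give asymptotic centrality; establishing that is essentially the content of Haagerup's theorem, which the paper invokes in the sharp form that a nuclear $A$ admits an approximate diagonal already lying in $\conv\{x^*\otimes x:\|x\|\le1\}$ (Theorem 3.1 of \cite{haagerup}). Granting that input, the remaining steps are more elementary than you anticipate, and the QTS property enters at a different spot than you guessed: one first makes $\mul(\Delta'_n)=1$ exactly by replacing $x_i$ with $x_i\mul(\Delta'_n)^{-1/2}$ (this costs only $\sum_i\|x_i\|^2\le(1-\e_n)^{-1}$, and $\sum_i\|x_i\|^2\ge\|\sum_i x_i^*x_i\|=1$ gives the tightness for free), and then applies Theorem~\ref{thm:qts} not to $\mul(\tilde\Delta)-1$ but to $\sum_i x_ix_i^*-1$, which has $\tau$-value $0$ for every $\tau\in T(A)$; Dixmier averaging by unitaries $u_j$ yields $\|\frac1l\sum_{i,j}u_jx_ix_i^*u_j^*-1\|$ small, and $\Delta_n=\frac1l\sum_{i,j}x_i^*u_j^*\otimes u_jx_i$ then satisfies all three requirements of $(\rm{ii})$ since the unitary rotation changes neither $\mul(\Delta_n)=1$ nor $\sum\|\cdot\|^2$. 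Your instinct that QTS is indispensable (and your $\cO_2$ remark) is correct, but as written neither of these two implications is proved.
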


Recall that a unital $\mathrm{C}^*$-alge\-bra $A$ is \emph{strongly amenable} if there is an approximate diagonal
that consists of convex combinations of $\{ u^*\otimes u : u\in\cU(A)\}$.
This property is formally stronger than symmetric amenability, but it is unclear whether
there is really a gap between these properties.

Second, we describe what is the $\mathrm{C}^*$-completion $\overline{A}^{\mathrm{u}}$ of
a unital $\mathrm{C}^*$-alge\-bra $A$ under the uniform $2$-norm.
This work is strongly influenced by the recent works of Kirchberg--R{\o}rdam (\cite{kr}),
Sato (\cite{sato}), and Toms--White--Winter (\cite{tww}), who studied the central sequence
algebra of a $\mathrm{C}^*$-alge\-bra modulo uniformly $2$-norm null sequences, in order to extend
Matui--Sato's result (\cite{ms}) from $\mathrm{C}^*$-alge\-bras with finitely many extremal tracial states
to more general ones. In fact, our result is very similar to theirs (particularly
to Kirchberg--R{\o}rdam's).
Let $A$ be a $\mathrm{C}^*$-alge\-bra and $S\subset T(A)$ be a non-empty metrizable closed face.
The reason we assume $S$ be metrizable is because it makes the description of
the boundary measures simpler.
We define the uniform $2$-norm on $A$ corresponding to $S$ by
\[
\|a\|_{2,S}=\sup\{ \tau(a^*a)^{1/2} : \tau\in S\}.
\]
The uniform $2$-norm satisfies
\[
\|ab\|_{2,S}\le\min\{\|a\|\|b\|_{2,S},\|a\|_{2,S}\|b\|\}
\ \mbox{ and }\
\sup_{\tau\in S}|\tau(a)|\le\|a\|_{2,S}.
\]
The $\mathrm{C}^*$-completion $\overline{A}^{\mathrm{u}}$ is defined to be
the $\mathrm{C}^*$-alge\-bra of the norm-bounded uniform $2$-norm Cauchy sequences,
modulo the ideal of the uniform $2$-norm null sequences.
For $\tau\in T(A)$, we denote by $\pi_\tau$ the corresponding GNS representation and
also $\|a\|_{2,\tau}=\tau(a^*a)^{1/2}$.
Let $N=(\bigoplus_{\tau\in S}\pi_\tau)(A)''$ be the enveloping
von Neumann algebra with respect to $S$.
When $S=T(A)$, it is the finite summand $A^{**}_{\mathrm{f}}$ of the second
dual von Neumann algebra $A^{**}$.
The tracial state $\tau\in S$ and the GNS representation $\pi_\tau$
extend normally on $N$.
For the center-valued trace $\ctr\colon N\to\cZ(N)$,
one has $\|a\|_{2,S}=\|\ctr(a^*a)\|^{1/2}$ and
$\overline{A}^{\mathrm{u}}$ coincides with
the closure $\overline{A}^{\st}$ of $A$ in $N$ with respect
to the strict topology associated with the Hilbert $\cZ(N)$-module $(N,\ctr)$.

Recall that the trace space $T(A)$ of a unital $\mathrm{C}^*$-alge\-bra is a Choquet simplex and so is
the closed face $S$.
We denote by $\Aff(S)$ the space of the affine continuous functions on $S$ and
consider the function system $\cAff(S)=\{ f|_{\partial S} : f\in\Aff(S)\}$
in $B(\partial S)$, where $B(\partial S)$ denotes the $\mathrm{C}^*$-alge\-bra of the bounded
Borel functions on $\partial S$.
For every $a\in A$, the formula $\hat{a}(\tau)=\tau(a)$ defines
a function $\hat{a}$ in $\Aff(S)$ (or $\cAff(S)$).
We note that $\{\hat{a} : a\in A\}$ is dense in $\Aff(S)$ (in fact equal, see \cite{cp}).
Let $\prob(\partial S)$ be the space of the probability measures on
the extreme boundary $\partial S$ of $S$.
Since $S$ is a metrizable Choquet simplex, every $\tau\in S$ has
a unique representing measure $\mu_\tau\in\prob(\partial S)$, which satisfies
\[
\tau(a)=\int \lambda(a)\,d\mu_\tau(\lambda) = \int \hat{a}(\lambda)\,d\mu_\tau(\lambda)
\]
for every $a\in A$ (Theorem II.3.16 in \cite{alfsen}).
The center $\cZ(\cAff(S))$ is defined to be
\[
\cZ(\cAff(S))=\{ f\in B(\partial S) : f\cAff(S)\subset\cAff(S)\}\subset \cAff(S).
\]
When $\partial S$ is closed (i.e., when $S$ is a Bauer simplex), one has $\cAff(S)=C(\partial S)$
and $\cZ(\cAff(S))=C(\partial S)$. However in general, the center $\cZ(\cAff(S))$ can be
trivial (see Section II.7 in \cite{alfsen}).

\begin{thm}\label{thm:ast}
Let $A$, $S$, and $N$ be as above. Then, there is a unital $*$-homo\-mor\-phism
$\theta\colon B(\partial S)\to\cZ(N)$ with ultra\-weakly dense range such that
$\theta(\hat{a})=\ctr(a)$ and
\[
\tau(\theta(f)a) = \int f(\lambda)\lambda(a)\,d\mu_\tau(\lambda) = \int f\hat{a}\,d\mu_\tau
\]
for every $a\in A$ and $\tau\in S$.
One has
\[
\overline{A}^{\st}=\{ x\in N : \ctr(xA)\subset \theta(\cAff(S)),\ \ctr(x^*x)\in \theta(\cAff(S))\}.
\]
In particular,
\[
\overline{A}^{\st}\cap\cZ(N) = \{ \theta(f) : f \in \cZ(\cAff(S))\}.
\]
Moreover, if $\partial S$ is closed, then for every $\tau\in\partial S$,
one has $\pi_\tau(\overline{A}^{\st})=\pi_\tau(N)=\pi_\tau(A)''$.
\end{thm}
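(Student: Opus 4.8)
The plan is to construct $\theta$ by disintegration and then read off its properties. For each $\tau\in S$ the GNS representation has a central decomposition $\pi_\tau=\int^\oplus_{\partial S}\pi_\lambda\,d\mu_\tau(\lambda)$ along the unique representing measure $\mu_\tau$; since every extreme $\lambda\in\partial S$ is a factorial trace, the center-valued trace on the fibre $\pi_\lambda(A)''$ is the scalar $\lambda$, so $\ctr(a)$ disintegrates to multiplication by $\hat a$ and $\cZ(\pi_\tau(A)'')=L^\infty(\mu_\tau)$. A bounded Borel $f$ acts fibrewise as the decomposable multiplication operator $\theta_\tau(f)$, and evaluating against the cyclic vector gives $\tau(\theta_\tau(f)a)=\int f\hat a\,d\mu_\tau$. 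To assemble the fibres into a single element of $\cZ(N)$ I would argue by a functional monotone-class argument: the set of $f\in B(\partial S)$ admitting a $\theta(f)\in\cZ(N)$ with $\tau(\theta(f)a)=\int f\hat a\,d\mu_\tau$ for all $a,\tau$ is a unital $*$-subalgebra (multiplicativity and the $*$-property are visible fibrewise) containing $\cAff(S)=\{\hat a\}$ (with $\theta(\hat a)=\ctr(a)$) and closed under bounded monotone sequential limits (by normality of $\ctr$ and monotone completeness of $N$). Since $\cAff(S)$ separates the points of the metrizable space $\partial S$, this algebra is all of $B(\partial S)$. Injectivity is clear because the point masses $\delta_\lambda$ occur among the $\mu_\tau$, and ultraweak density of the range follows from $\overline{\ctr(A)}^{\,uw}=\ctr(N)=\cZ(N)$, using normality of $\ctr$ and $\sigma$-weak density of $A$ in $N$.

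Write $D$ for the right-hand side. For $\overline{A}^{\st}\subseteq D$ I would record the estimate $\|\ctr(bc)\|\le\min\{\|b\|\,\|c\|_{2,S},\,\|b\|_{2,S}\,\|c\|\}$, valid because $\|\cdot\|_{2,S}=\|\ctr(\cdot^*\cdot)\|^{1/2}$ and the states $\bar\tau=\tau|_{\cZ(N)}$ norm $\cZ(N)$. As $A\subseteq D$ and $\theta(\cAff(S))$ is norm-closed, this shows $D$ is closed under norm-bounded $\|\cdot\|_{2,S}$-limits, whence $\overline{A}^{\st}\subseteq D$. The useful reformulation, obtained by comparing $\bar\sigma$-values, is that $x\in D$ exactly when $\sigma\mapsto\sigma(xa)$ $(a\in A)$ and $\sigma\mapsto\sigma(x^*x)$ are weak$^*$-continuous on $S$; since the conditions defining $D$ are self-adjoint and trace-symmetric, this is equivalent to continuity of $\sigma\mapsto\sigma(|x-a|^2)$ on $S$ for every $a\in A$.

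The heart of the proof is the reverse inclusion $D\subseteq\overline{A}^{\st}$, where I would use a minimax argument. Fix $x\in D$, put $C=\|x\|$ and $A_C=\{a\in A:\|a\|\le C\}$. For fixed $\sigma$ the function $a\mapsto\sigma(|x-a|^2)$ is convex, and by Kaplansky density $\inf_{a\in A_C}\sigma(|x-a|^2)=0$; for fixed $a\in A_C$ the function $\sigma\mapsto\sigma(|x-a|^2)$ is affine and, crucially, weak$^*$-continuous on the compact set $S$ precisely because $x\in D$. Sion's minimax theorem then gives
\[
\inf_{a\in A_C}\sup_{\sigma\in S}\sigma(|x-a|^2)=\sup_{\sigma\in S}\inf_{a\in A_C}\sigma(|x-a|^2)=0,
\]
so there are $a_n\in A$ with $\|a_n\|\le\|x\|$ and $\|x-a_n\|_{2,S}\to0$, i.e.\ $x\in\overline{A}^{\st}$. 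I expect this exchange of $\inf$ and $\sup$—legitimized by the continuity that is built into the definition of $D$—to be the main point of the whole argument; the more naive route of gluing local approximants by a central partition of unity fails in general, since $\cZ(\cAff(S))$ may be trivial.

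The center formula is then immediate: if $z\in\overline{A}^{\st}\cap\cZ(N)=D\cap\cZ(N)$, taking $a=1$ gives $z=\ctr(z)=\theta(f)$ with $f\in\cAff(S)$, and $\ctr(za)=\theta(f\hat a)\in\theta(\cAff(S))$ for all $a$ forces $f\,\cAff(S)\subseteq\cAff(S)$, i.e.\ $f\in\cZ(\cAff(S))$; the reverse containment is the same computation backwards. Finally, when $\partial S$ is closed one has $\cZ(\cAff(S))=C(\partial S)$ and $\theta(C(\partial S))\subseteq\overline{A}^{\st}$. Here $\pi_\tau(N)=\pi_\tau(A)''$ by normality of $\pi_\tau$, and for surjectivity of $\pi_\tau$ on $\overline{A}^{\st}$ I would, given $w\in\pi_\tau(A)''$ with $\|w\|\le1$, construct $y$ by the iteration $y_k=(1-\theta(\beta_k))\,y_{k-1}+\theta(\beta_k)\,b_k$, where $b_k\in A$ with $\|b_k\|\le1$ approximates $w$ in $\|\cdot\|_{2,\tau}$ (Kaplansky) and $\beta_k\in C(\partial S)$ is a cutoff with $\beta_k(\tau)=1$ of small support. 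The convex-combination form keeps $\|y_k\|\le1$, while continuity of $\lambda\mapsto\|b_k-y_{k-1}\|_{2,\lambda}$ on $\partial S$ (again because $b_k,y_{k-1}\in\overline{A}^{\st}$) lets me shrink $\operatorname{supp}\beta_k$ so that $\|y_k-y_{k-1}\|_{2,S}$ is summable; the $\|\cdot\|_{2,S}$-limit $y$ lies in $\overline{A}^{\st}$ and satisfies $\pi_\tau(y)=\lim_k\pi_\tau(b_k)=w$. Keeping the operator norm under control through this limit, via the convex cutoffs, is the only delicate point in the last step.
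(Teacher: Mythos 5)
Your proposal is correct, and at the two substantive points it takes a genuinely different route from the paper. For the hard inclusion $\{x : \ctr(xA)\subset\theta(\cAff(S)),\ \ctr(x^*x)\in\theta(\cAff(S))\}\subset\overline{A}^{\st}$, the paper starts from a net $(b_j)$ in the ball of radius $\|x\|$ converging to $x$ ultrastrongly, observes that $\ctr((b_j-x)^*(b_j-x))\to0$ weakly in $\Aff(S)$ (using that every state of $\Aff(S)$ is evaluation at a point of $S$), passes to a norm-small convex combination by Hahn--Banach/Mazur, and controls that combination via the operator inequality $(\sum\alpha_jb_j)^*(\sum\alpha_jb_j)\le\sum\alpha_jb_j^*b_j$. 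Your minimax argument --- Sion applied to $(a,\sigma)\mapsto\sigma(|x-a|^2)$ on $A_{\|x\|}\times S$, with $\sup_\sigma\inf_a=0$ from Kaplansky density and the crucial weak$^*$-continuity in $\sigma$ coming precisely from membership in $D$ --- reaches the same conclusion and is essentially the dual form of the same Hahn--Banach separation; it buys a cleaner statement of where the hypothesis on $x$ enters, at the cost of invoking a minimax theorem where the paper only uses Mazur's lemma plus one operator inequality. For the last assertion the paper proves a more general statement (Theorem \ref{thm:cwf}) via Pedersen's up--down theorem, lifting a monotone sequence and damping the increments by scalar cutoffs so the series converges in the uniform $2$-norm while staying norm-bounded; your iteration $y_k=(1-\theta(\beta_k))y_{k-1}+\theta(\beta_k)b_k$ is a valid direct alternative, and your two delicate points do check out: $\|(1-z)u+zv\|\le\max\{\|u\|,\|v\|\}$ for central $0\le z\le1$ by the row--column factorization, and the continuity of $\lambda\mapsto\|b_k-y_{k-1}\|_{2,\lambda}$ on the closed set $\partial S$ follows from $b_k-y_{k-1}\in\overline{A}^{\st}\subset D$ and $\cAff(S)=C(\partial S)$.

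Two small caveats. First, your construction of $\theta$ via the central disintegration $\pi_\tau=\int^\oplus\pi_\lambda\,d\mu_\tau$ tacitly assumes $L^2(A,\tau)$ separable, which the paper does not; the paper's Lemma \ref{lem:bmac} gets the fibrewise $*$-isomorphisms $\theta_\tau\colon L^\infty(\partial S,\mu_\tau)\to\cZ(\pi_\tau(A)'')$ from Sakai's Radon--Nikodym theorem and uniqueness of representing measures, which is what your ``multiplicativity visible fibrewise'' really rests on, so you should route through that instead. Your monotone-class gluing is then a legitimate substitute for the paper's gluing along the upward-directed family of support projections $p_\tau$. Second, the injectivity of $\theta$ you mention is true but not claimed and not needed.
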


Takesaki and Tomiyama (\cite{tt}) have studied the structure of a $\mathrm{C}^*$-alge\-bra, for
which the set of pure states is closed in the state space, by using a continuous
bundle of $\mathrm{C}^*$-alge\-bras (see also \cite{fell}). We carry out a similar study in Section \ref{sec:ax}
for a $\mathrm{C}^*$-alge\-bra $A$, for which $\partial S$ is closed, in terms of a continuous $\mathrm{W}^*$-bundle,
and present $\mathrm{W}^*$-analogues of a few results for
$\mathrm{C}^*$-bundles obtained in \cite{hrw,dw}.
In particular, we give a criterion for a continuous $\mathrm{W}^*$-bundle
over a compact space $K$ with all fibers isomorphic to the hyperfinite
$\mathrm{II}_1$ factor $\cR$ to be isomorphic to the trivial bundle $C_\sigma(K,\cR)$,
the $\mathrm{C}^*$-alge\-bra of the norm-bounded and ultra\-strongly continuous functions
from $K$ into $\cR$. We denote the evaluation map at $\lambda\in K$
by $\ev_\lambda\colon C_\sigma(K,\cR)\to\cR$. As an application, we show
that $\overline{A}^{\st}\cong C_\sigma(\partial S,\cR)$ for certain $A$.

\begin{thm}\label{thm:stct}
Let $A$ be a separable $\mathrm{C}^*$-alge\-bra and $S\subset T(A)$ be a closed face.
Assume that $\pi_\tau(A)''\cong\cR$ for all $\tau\in\partial S$
and that $\partial S$ is a compact space with finite covering dimension.
Then, one can coordinatize the isomorphisms $\pi_\tau(A)''=\cR$ in such a way that
they together give rise to a $*$-homo\-mor\-phism
$\pi\colon A\to C_\sigma(\partial S,\cR)$ such that
$\pi_\tau=\ev_\tau\circ\pi$. The image of $\pi$ is
dense with respect to the uniform $2$-norm.
\end{thm}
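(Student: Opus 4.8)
The plan is to deduce the statement from Theorem~\ref{thm:ast} by trivializing the continuous $\mathrm{W}^*$-bundle $\overline{A}^{\st}$. Since $\partial S$ is assumed closed, $S$ is a Bauer simplex and $\cZ(\cAff(S))=C(\partial S)$, so by Theorem~\ref{thm:ast} the center $\overline{A}^{\st}\cap\cZ(N)$ equals $\theta(C(\partial S))\cong C(K)$, where $K:=\partial S$, and $\pi_\tau(\overline{A}^{\st})=\pi_\tau(A)''\cong\cR$ for every $\tau\in K$. Thus $\overline{A}^{\st}$ is a continuous $\mathrm{W}^*$-bundle over the compact metrizable space $K$ (metrizable because $A$ is separable) all of whose fibers are $\cR$. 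As $A$ is dense in $\overline{A}^{\st}$ for the uniform $2$-norm by the very definition of the completion, it suffices to produce a $C(K)$-linear $\mathrm{W}^*$-bundle isomorphism $\Phi\colon\overline{A}^{\st}\to C_\sigma(K,\cR)$; setting $\pi:=\Phi|_A$ then gives $\pi_\tau=\ev_\tau\circ\pi$ once $\Phi$ is used to coordinatize each $\pi_\tau(A)''=\cR$, and $\pi(A)$ is automatically dense in the uniform $2$-norm since $\Phi$ is $2$-norm isometric.

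To build $\Phi$ I would trivialize the bundle by exhibiting a fixed (constant) copy of $\cR$ inside $\overline{A}^{\st}$ that generates it over $C(K)$. Writing $\cR$ as the ultraweak closure of $\bigcup_n M_{2^n}(\IC)$, the goal is to construct, by induction on $n$, coherent systems of matrix units $\{e^{(n)}_{ij}\}_{i,j}\subset\overline{A}^{\st}$ --- that is, global continuous sections over $K$, with $e^{(n+1)}$ refining $e^{(n)}$ --- which fiberwise generate $\pi_\tau(A)''=\cR$ for every $\tau\in K$. Fixing a countable uniformly $2$-norm-dense subset of $A$ and interlacing an approximation step into the induction, one arranges that the von Neumann subbundle generated by all the $e^{(n)}_{ij}$ is fiberwise all of $\cR$. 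The resulting unital embedding $\cR\hookrightarrow\overline{A}^{\st}$ is then a $2$-norm isometry onto a generating subbundle and extends to the desired isomorphism $\Phi$.

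The crux is the inductive extension of the matrix units as \emph{global} sections over $K$, and this is where the hypotheses enter. Locally the problem is unobstructed: since each fiber is $\cR$, near any point one may halve projections and refine matrix units, yielding local trivializations. To patch these over $K$ I would invoke the $\mathrm{W}^*$-analogues, developed in Section~\ref{sec:ax} in the spirit of \cite{hrw,dw}, of the triviality results for $\mathrm{C}^*$-bundles with strongly self-absorbing fibers: the relevant structure group is the unitary group $\cU(\cR)$ in the $\|\cdot\|_2$-topology, which is contractible, together with Connes' fact that every automorphism of $\cR$ is approximately inner. The obstructions to globalizing the local data then lie in cohomology groups of the form $H^*(K;\pi_*(\cU(\cR)))$, which vanish once $\dim K<\infty$, so a dimension-theoretic selection and patching argument over the finite-dimensional metrizable base produces the global sections. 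I expect this patching --- passing from fiberwise or local matrix units to continuous global ones while controlling the homotopical obstructions via $\dim K<\infty$ --- to be the main obstacle; the self-absorption $\cR\cong\cR\mathbin{\overline{\otimes}}\cR$ of the fibers is what makes the necessary approximately central refinements available and the intertwining converge.
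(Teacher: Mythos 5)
Your reduction is exactly the paper's: since $\partial S$ is closed, Theorem \ref{thm:ast} makes $\overline{A}^{\st}$ a strictly separable continuous $\mathrm{W}^*$-bundle over $K=\partial S$ with all fibers $\pi_\tau(\overline{A}^{\st})=\pi_\tau(A)''\cong\cR$, and the theorem follows once one shows $\overline{A}^{\st}\cong C_\sigma(K,\cR)$ (the paper obtains this by combining Corollary \ref{cor:mcduff} with Theorem \ref{thm:crf}). The gap is in how you propose to trivialize. A continuous $\mathrm{W}^*$-bundle is not locally trivial --- local (approximate) trivializations are essentially what has to be produced, not an input --- so there is no locally trivial bundle with structure group $\cU(\cR)$ to which \v{C}ech obstruction theory could be applied; recall that even $\lambda\mapsto\|\pi_\lambda(x)\|$ fails to be upper semicontinuous in general. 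Relatedly, your own mechanism makes the hypothesis $\dim K<\infty$ superfluous: if the structure group is contractible, the obstruction groups $H^*(K;\pi_*(\cU(\cR)))$ vanish for \emph{every} compact $K$, so your argument cannot be locating where finite-dimensionality is actually used, and the theorem is not claimed without that hypothesis.

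What replaces this in the paper is an approximate-intertwining scheme. Uniqueness up to unitary conjugacy of trace-preserving embeddings of separable hyperfinite tracial von Neumann algebras into $\cR^\omega$ (Jung) yields a pseudo-metric $d$ depending only on joint distributions, with respect to which $\lambda\mapsto\{\pi_\lambda(a_i)\}_i$ is continuous; Lemma \ref{lem:extension} then extends finite approximately implemented partial morphisms into $C_\sigma(K,\cR)$, gluing local lifts by a partition of unity that is made approximately central by cutting with an approximately projective partition of unity built from an approximately central approximately multiplicative embedding of $\IM_k$. The existence of those $\IM_k$-embeddings is Corollary \ref{cor:mcduff}, and this is the one place where $\dim K<\infty$ genuinely enters, via the order-zero/colored partition-of-unity argument of Kirchberg--R{\o}rdam. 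Finally, your plan of producing \emph{exact} coherent global matrix units $e^{(n)}_{ij}$ as continuous sections would require a stability argument, uniform over $K$ and compatible with the refinements $e^{(n)}\subset e^{(n+1)}$, correcting approximate matrix units to exact ones; you do not supply this, and the paper avoids it entirely by working with completely positive maps and $2$-norm estimates and passing to the limit of a back-and-forth intertwining in Theorem \ref{thm:crf}.
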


\subsection*{Acknowledgment.}
The author is grateful to Yasuhiko Sato for many illuminating conversations.
The author was partially supported by JSPS (23540233).

\section{QTS property and symmetric amenability}

\begin{proof}[Proof of Theorem~\ref{thm:qts}]
Ad $(\rm{\ref{con:qts}})\Rightarrow({\rm\ref{con:dix}})$.
Although the proof becomes a bit shorter if we use Theorem 5 in \cite{hz},
we give here a more direct proof of this implication.
Let $a\in A$ and $\e>0$ be given as in condition $({\rm\ref{con:dix}})$.
Let $\e_0=\sup_{\tau\in T(A)}|\tau(a)|<\e$.
We decompose the second dual von Neumann algebra $A^{**}$
into the finite summand $A^{**}_{\mathrm{f}}$ and the properly infinite
summand $A^{**}_{\infty}$. We denote the corresponding embedding of $A$
by $\pi_{\mathrm{f}}$ and $\pi_\infty$, and the center-valued
trace of $A^{**}_{\mathrm{f}}$ by $\ctr$.
We note that $\|\ctr(\pi_{\mathrm{f}}(a))\|=\e_0$.
By the Dixmier approximation theorem, there
are $v_1,\ldots,v_k\in \cU(A^{**}_{\mathrm{f}})$ such that
$\|\ctr(\pi_{\mathrm{f}}(a)) - \frac{1}{k}\sum_{i=1}^k v_i\pi_{\mathrm{f}}(a)v_i^*\|<\e-\e_0$.
On the other hand, by Halpern's theorem (\cite{halpern}),
there are $w_1,\ldots,w_l\in \cU(A^{**}_{\infty})$ such that
$\|\frac{1}{l}\sum_{j=1}^l w_i\pi_{\infty}(a)w_i^*\|<\e$.
Before giving the detail of the proof of this fact, we finish the proof of
$(\rm{\ref{con:qts}})\Rightarrow({\rm\ref{con:dix}})$.
By allowing multiplicity, we may assume that $k=l$ and consider $u_i=v_i\oplus w_i\in A^{**}$.
Then, $\|\frac{1}{k}\sum_{i=1}^k u_iau_i^*\|<\e$ in $A^{**}$.
For each $i$, take a net $(u_i(\lambda))_\lambda$ of unitary
elements in $A$ which converges to $u_i\in A^{**}$
in the ultra\-strong$^*$-topology. By the Hahn-Banach theorem,
$\conv\{\frac{1}{k}\sum_{i=1}^k u_i(\lambda)au_i(\lambda)^*\}_\lambda$ contains an element
of norm less than $\e$.

Now, we explain how to apply Halpern's theorem. Let $Z$ (resp.\ $I$) be
the center (resp.\ strong radical) of $A^{**}_{\infty}$.
Let $\Lambda$ be the directed set of all finite partitions of unity by projections in $Z$,
and $\lambda=\{p_{\lambda,i}\}_i\in\Lambda$ be given.
Applying the QTS property to the non-zero $*$-homomorphism
$A\ni x\mapsto p_{\lambda,i}(\pi_\infty(x)+I)\in p_{\lambda,i}((\pi_\infty(A)+I)/I)$,
one obtains a (tracial) state $\tau_{\lambda,i}$ on $\pi_\infty(A)+I$
such that $\tau_{\lambda,i}(p_{\lambda,i})=1$, $\tau_{\lambda,i}(I)=0$,
and $|\tau_{\lambda,i}(\pi_\infty(a))|\le\e_0$.
Let $\tilde{\tau}_{\lambda,i}$ be a state extension of it on $p_{\lambda,i}A^{**}_{\infty}$.
We define the linear map $\p_\lambda\colon A^{**}_{\infty}\to Z$ by
$\p_\lambda(x)=\sum_i\tilde{\tau}_{\lambda,i}(x)p_{\lambda,i}$, and
take a limit point $\p\colon A^{**}_{\infty}\to Z$. The map $\p$
is a unital positive $Z$-linear map such that $\p(I)=0$ and $\|\p(\pi_\infty(a))\|\le\e_0$.
By Halpern's theorem (Theorem 4.12 in \cite{halpern}), the norm-closed convex hull
of the unitary conjugations of $\pi_\infty(a)$ contains $\p(\pi_\infty(a))$.

Ad $({\rm\ref{con:dix}})\Rightarrow(\rm{\ref{con:qts}})$.
Suppose that there is a closed two-sided proper ideal $I$ in $A$ such
that $A/I$ does not have a tracial state. Let $e_n$ be the approximate unit of $I$.
Then, one has $\tau(1-e_n)\searrow 0$ for every $\tau\in T(A)$. By Dini's theorem,
there is $n$ such that $q=1-e_n$ satisfies $\tau(q)<1/2$ for all $\tau\in T(A)$.
By condition $({\rm\ref{con:dix}})$, there are $u_1,\ldots,u_k\in\cU(A)$ such that
$\|\frac{1}{k}\sum_{i=1}^k u_iqu_i^*\|<1/2$, which is in contradiction with the fact that
$\frac{1}{k}\sum_{i=1}^k u_iqu_i^*\in 1+I$.
\end{proof}

\begin{proof}[Proof of Theorem~\ref{thm:symame}]
The implication $({\rm\ref{con:symame1}})\Rightarrow(\rm{\ref{con:symame}})$
is obvious and $({\rm\ref{con:symame}})\Rightarrow(\rm{\ref{con:nucqts}})$
is standard: Since amenability implies nuclearity by Connes's theorem (\cite{connes}),
we only have to prove the QTS property. Let $(\Delta_n)_n$ be a symmetric approximate diagonal
and define $\mul_\Delta(a)=\sum_i x_iay_i$
for $\Delta=\sum_i x_i\otimes y_i\in A\otimes_\alg A$ and $a\in A$.
Then, for any proper ideal $I$ in $A$ and a state $\p$ on $A$ such that $\p(I)=0$,
any limit point $\tau$ of $(\p\circ\mul_{\Delta_n})_n$ is a bounded trace
such that $\tau(I)=0$ and $\tau(1)=1$. By polar decomposition,
one obtains a tracial state on $A$ which vanishes on $I$.

We prove the implication
$({\rm\ref{con:nucqts}})\Rightarrow(\rm{\ref{con:symame0}})\Rightarrow(\rm{\ref{con:symame1}})$.
Since $A$ is nuclear, it is amenable thanks to Haagerup's theorem (Theorem 3.1 in \cite{haagerup}).
Moreover, there is an approximate diagonal $(\Delta'_n)_n$ in the convex
hull of $\{ x^*\otimes x : \|x\|\le1\}$.
We note that $\e_n:=\| 1 - \mul(\Delta'_n) \|\to0$.
We fix $n$ for the moment and write $\Delta'_n=\sum_i x_i^*\otimes x_i$.
By replacing $x_i$ with $x_i\mul(\Delta'_n)^{-1/2}$, we may assume $\mul(\Delta'_n)=1$
but $\sum_i \|x_i\|^2\le(1-\e_n)^{-1}$.
Since $\tau(\sum_i x_ix_i^*)=1$ for all $\tau\in T(A)$, Theorem~\ref{thm:qts}
provides $u_1,\ldots,u_l\in\cU(A)$ such that
$\|\frac{1}{l}\sum_{j=1}^l \sum_i u_jx_ix_i^*u_j^*\|\le 1+\e_n$. Thus,
$\Delta_n=\frac{1}{l}\sum_{i,j} x_i^*u_j^*\otimes u_jx_i$ satisfies condition $(\rm{\ref{con:symame0}})$.
Now, rewrite $\Delta_n$ as $\sum_i y_i^*\otimes y_i$. Then,
$\Delta^{\sharp}_n=(\sum_i\|y_i\|^2)^{-2}\sum_{i,j} y_i^*y_j\otimes y_j^*y_i$ is
a symmetric approximate diagonal that meets condition $(\rm{\ref{con:symame1}})$.
\end{proof}

\section{Trace zero elements and commutators}\label{sec:com}

In this section, we consider the trace zero elements in a $\mathrm{C}^*$-alge\-bra.
A simple application of the Hahn--Banach theorem
implies that $a\in A$ satisfies $\tau(a)=0$ for all $\tau\in T(A)$
if and only if it belongs to the norm-closure of the subspace $[A,A]$
spanned by commutators $[b,c]=bc-cb$, $b,c\in A$.
Moreover, such $a$ can be written as a convergent sum of commutators (\cite{cp}).
There are many works as to how uniformly this happens (\cite{pp,fack,fh,marcoux,pop} just to name a few).
The following fact is rather standard.
\begin{thm}\label{thm:com}
There is a constant $C>0$ which satisfies the following.
Let $A$ be a $\mathrm{C}^*$-alge\-bra and $a\in A$ and $\e>0$ be such that $\sup_{\tau\in T(A)}|\tau(a)|<\e$.
Then, there are $k\in\IN$ and $b_i$ and $c_i$ in $A$ such that
$\sum_{i=1}^k\|b_i\|\|c_i\|\le C\|a\|$ and $\|a-\sum_{i=1}^k[b_i,c_i]\|<\e$.
\end{thm}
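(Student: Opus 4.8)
The plan is to produce the commutator approximation first inside the second dual $A^{**}$, where unitary averaging is available, and then to transport it back to $A$ by a Hahn--Banach argument. What keeps the cost $\sum_i\|b_i\|\|c_i\|$ under control is the elementary identity
\[
a-uau^*=[au^*,u],\qquad u\in\cU(A^{**}),
\]
so that the difference between $a$ and any unitary average $\frac1k\sum_i u_iau_i^*$ equals $\frac1k\sum_i[au_i^*,u_i]$, a sum of commutators of cost $\frac1k\sum_i\|au_i^*\|\le\|a\|$. Set $\e_0=\sup_{\tau\in T(A)}|\tau(a)|<\e$, and let $K_A\subset A$ (resp.\ $K\subset A^{**}$) be the set of all sums $\sum_i[b_i,c_i]$ with $\sum_i\|b_i\|\|c_i\|\le 3\|a\|$; both are convex and balanced, which is the structural fact that will make the transport step work. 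It therefore suffices to exhibit an element of $K$ within $\e$ of $a$ in $A^{**}$ and then pull it back into $K_A$.

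For the $A^{**}$ computation I decompose $A^{**}=A^{**}_{\mathrm f}\oplus A^{**}_{\infty}$ exactly as in the proof of Theorem~\ref{thm:qts}. On the finite summand the Dixmier approximation theorem supplies $v_i\in\cU(A^{**}_{\mathrm f})$ with $\frac1k\sum_i v_i\pi_{\mathrm f}(a)v_i^*$ within $\delta$ of $\ctr(\pi_{\mathrm f}(a))$, where $\|\ctr(\pi_{\mathrm f}(a))\|=\e_0$ as noted there. On the properly infinite summand Halpern's theorem (no QTS hypothesis is needed now) supplies $w_i\in\cU(A^{**}_{\infty})$, after matching multiplicities, with $\frac1k\sum_i w_i\pi_\infty(a)w_i^*$ within $\delta$ of a central element $z_\infty\in\cZ(A^{**}_{\infty})$ in the norm-closed convex hull of the conjugates, so $\|z_\infty\|\le\|a\|$. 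Putting $u_i=v_i\oplus w_i$, the averaging defect $\frac1k\sum_i[au_i^*,u_i]$ has cost $\le\|a\|$ and moves $a$ to within $\delta$ of the central element $z=\ctr(\pi_{\mathrm f}(a))\oplus z_\infty$. The summand $\ctr(\pi_{\mathrm f}(a))$ has norm $\le\e_0$ and is left in the error. To absorb $z_\infty$ I use that its unit splits as $1=e+f$ with $e\sim f\sim 1$ (write $1=e+(1-e)$ for a copy $e\sim1$ and apply Cantor--Bernstein to $1-e$), giving isometries $v,w\in A^{**}_{\infty}$ with $vv^*=e$, $ww^*=f$, $v^*v=w^*w=1$; then $[v^*,v]=1-e=f$ and $[w^*,w]=1-f=e$, so that $z_\infty=z_\infty e+z_\infty f=[z_\infty w^*,w]+[z_\infty v^*,v]$ is a sum of two commutators of cost $\le 2\|z_\infty\|\le 2\|a\|$. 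Collecting terms, $a$ lies within $\e_0+\delta$ of an element $X\in K$; taking $\delta<\e-\e_0$ gives $\|a-X\|<\e$ with total cost $\le 3\|a\|$.

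It remains to pull $X$ back into $A$. By Kaplansky density together with the joint ultrastrong$^*$-continuity of multiplication on bounded sets, every commutator $[B,C]$ with $B,C\in A^{**}$ is an ultraweak limit of commutators $[b_\lambda,c_\lambda]$ with $b_\lambda,c_\lambda\in A$, $\|b_\lambda\|\le\|B\|$, $\|c_\lambda\|\le\|C\|$; hence $K\subset\overline{K_A}^{\,w^*}$. If $a$ were at norm-distance $\ge\e$ from the convex balanced set $K_A$, separating the open $\e$-ball about $a$ from $K_A$ produces $\p\in A^*$ with $\mathrm{Re}\,\p(a)+\e\|\p\|\le\inf_{K_A}\mathrm{Re}\,\p$; this infimum is $\le 0$ because $0\in K_A$, and it is unchanged on passing to $\overline{K_A}^{\,w^*}\supseteq K$ since $\p$ is $w^*$-continuous. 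Evaluating on the above $X\in K$ gives $\mathrm{Re}\,\p(a-X)\le-\e\|\p\|$, forcing $\|a-X\|\ge\e$, a contradiction. Thus some $Y=\sum_i[b_i,c_i]\in K_A$ satisfies $\|a-Y\|<\e$ with $\sum_i\|b_i\|\|c_i\|\le 3\|a\|$, proving the theorem with $C=3$.

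The two steps I expect to carry the real weight are the passage from $A^{**}$ back to $A$, which succeeds only because the cost-bounded sums of commutators form a convex balanced set so that Hahn--Banach separation applies, and the correct bookkeeping of the central remainder $z_\infty$ produced by Halpern's theorem; once the properly infinite halving $1=e+f$ with $e\sim f\sim1$ is in hand, the remaining averaging is the standard Dixmier/Halpern argument already used for Theorem~\ref{thm:qts}.
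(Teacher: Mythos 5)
Your proof is correct, and it takes a genuinely different route from the paper in the one step that carries real content. The paper also works in $A^{**}$, splits off the center-valued trace $\ctr(a)$ (of norm $\e_0<\e$), and pulls the result back into $A$ by Kaplansky density plus a convexity/Hahn--Banach argument essentially identical to your separation step; but for the middle step it simply quotes the Fack--de la Harpe theorem to write the trace-zero element $a-\ctr(a)$ \emph{exactly} as a sum of $m=10$ commutators in $A^{**}$ with cost at most $2\cdot 12^2\,\|a\|$. You instead manufacture the commutators by hand: the identity $a-uau^*=[au^*,u]$ converts the Dixmier averaging defect into commutators of total cost $\le\|a\|$; on the properly infinite summand the plain Dixmier approximation theorem already yields the central element $z_\infty$ (Halpern's refinement, and hence the QTS hypothesis, is indeed not needed since you do not require $z_\infty$ to be small); and the halving $1=e+f$ with $e\sim f\sim 1$ exhibits $z_\infty$ as two commutators of cost $\le 2\|a\|$. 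Your version buys self-containedness and a much better constant ($C=3$ versus $2\cdot 12^2$), at the price of only an approximate decomposition in $A^{**}$ --- which is harmless, since exactness is destroyed anyway when one passes to convex combinations inside $A$; the paper's version buys brevity and a fixed number of commutators at the von Neumann algebra level (a feature that, as the paper notes, does not survive the passage to $A$ in any case). Your explicit Hahn--Banach separation of the $\e$-ball from the convex, cost-bounded set $K_A$ is a spelled-out form of the paper's one-line appeal to the fact that a norm bounded away from a convex set persists under weak limits, and the observation that cost-bounded sums of commutators form a convex set is exactly what makes both versions of the pull-back work.
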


Unlike the case for von Neumann algebras, there is no bound on $k$ in terms of $\e$ and $\|a\|$
that works for general $\mathrm{C}^*$-alge\-bras. A counterexample is constructed by Pedersen and Petersen
(Lemma 3.5 in \cite{pp}: the element $x_n-y_n\in [A_n,A_n]$ constructed there has the property
that $\| (x_n-y_n) -z \|\geq1$ for any sum $z$ of $n$ self-commutators). This also means that $k$
in Theorem~\ref{thm:qts} depends on the particular element $a$ in $A$.
Nevertheless one can bound $k$ under some regularity condition.
Recall that $A$ is said to be \emph{$\cZ$-stable} if
$A\cong\cZ\otimes A$ for the Jiang--Su algebra $\cZ$.
The Jiang--Su algebra $\cZ$ is a simple $\mathrm{C}^*$-alge\-bra which is
an inductive limit of prime dimension drop algebras
and such that $\cZ\cong\cZ^{\otimes\infty}$ (Theorem 2.9 and Theorem 4 in \cite{js}).

\begin{thm}\label{thm:comreg}
There is a constant $C>0$ which satisfies the following.
Let $A$ be an exact $\cZ$-stable $\mathrm{C}^*$-alge\-bra, and
$\e>0$ and $a\in A$ be such that $\sup_{\tau\in T(A)}|\tau(a)|<\e$.
Then, for every $R\in\IN$, there are $b(r)$ and $c(r)$ in $A$ such that
$\sum_{r=1}^R\|b(r)\|\|c(r)\|\le C\|a\|$ and $\|a-\sum_{r=1}^R[b(r),c(r)]\|<\e+C\|a\|R^{-1/2}$.
\end{thm}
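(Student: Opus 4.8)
The plan is to reduce the statement to a quantitative version of the Dixmier property and to read off the rate $R^{-1/2}$ from an averaging estimate. The starting observation is the elementary identity
\[
\sum_{r=1}^R\Bigl[\tfrac1R a u_r^*,\,u_r\Bigr]=a-\tfrac1R\sum_{r=1}^R u_r a u_r^*,
\]
valid for any $u_1,\dots,u_R\in\cU(A)$. Taking $b(r)=\tfrac1R a u_r^*$ and $c(r)=u_r$ we get $\sum_{r=1}^R\|b(r)\|\,\|c(r)\|\le\|a\|$ for free, so the \emph{entire} content of the theorem is the existence of $R$ unitaries with $\|\tfrac1R\sum_r u_r a u_r^*\|<\e+C\|a\|R^{-1/2}$. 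Thus I would isolate and prove a quantitative Dixmier property: in an exact $\cZ$-stable algebra one can average an element down to a norm-$<\e$ neighbourhood of its center-valued trace using only $R$ conjugations, with error $O(\|a\|R^{-1/2})$.

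To interpret the target I would pass, exactly as in the proof of Theorem~\ref{thm:qts}, to the second dual and work in the finite summand $N=A^{**}_{\mathrm f}$ with its center-valued trace $\ctr$. Here exactness does its job: it forces every $2$-quasitrace on $A$ to be a trace, so that $\|\ctr(a)\|=\sup_{\tau\in T(A)}|\tau(a)|<\e$ and the natural target $\ctr(a)$ genuinely has small norm. The role of $\cZ$-stability is to supply, for each $R$, arbitrarily large approximately central matrix blocks in $N$ (which absorbs the hyperfinite factor $\cR$); combining these with a standard approximation of $a$ I would reduce the averaging estimate to its finite-dimensional matrix counterpart, namely the assertion that in $M_n$ there are $R$ unitaries with $\|\tfrac1R\sum_r u_r x u_r^*-\tr(x)1\|\le C\|x\|R^{-1/2}$. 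The latter I would obtain from a noncommutative Khintchine / matrix-concentration inequality applied to a random or suitably structured choice of the $u_r$: $R$ conjugations produce operator-norm cancellation of order $R^{-1/2}$ about the mean $\tr(x)1$. The properly infinite summand $A^{**}_\infty$ is then treated precisely as in Theorem~\ref{thm:qts}, through Halpern's theorem with the same $R^{-1/2}$ rate, and the two summands are recombined by setting $u_r=v_r\oplus w_r$.

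The hard part will be obtaining the \emph{clean} rate $R^{-1/2}$ in operator norm, uniformly in the block size $n$: a naive random-unitary argument loses a factor $\sqrt{\log n}$, so I would either tie $n$ to $R$ and track the dependence explicitly, or replace the random choice by a structured system (such as a partial Weyl system) whose second-moment computation closes without the logarithm. A second, subtler point in the same step is reconciling the fixed element $a$ with the matrix block furnished by $\cZ$-stability, so that the finite-dimensional estimate really applies; this is where I expect the interplay of $\cZ$-stability and the center-valued trace to be genuinely delicate. Finally, the transfer from $N$ back to $A$ must not inflate the number of commutators: instead of a Hahn--Banach convex-combination argument (which would replace $R$ by $R$ times the size of the convex combination), I would realize the required approximately central matrix units inside $A$ itself and arrange the $R$ unitaries directly in $\cU(A)$.
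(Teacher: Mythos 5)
Your opening identity $\sum_{r}[\tfrac1R au_r^*,u_r]=a-\tfrac1R\sum_r u_rau_r^*$ is correct and does reduce the theorem to a Dixmier-type averaging over exactly $R$ unitaries of $A$ with error $C\|a\|R^{-1/2}$; the trouble is that this reformulation is at least as hard as the original statement, and the mechanism you propose for it fails at its central step. Unitaries drawn from an approximately central copy of $\IM_k$ supplied by $\cZ$-stability approximately \emph{commute} with $a$, so conjugation by them leaves $a$ essentially fixed: $\tfrac1R\sum_r u_rau_r^*\approx a$, and no averaging takes place. Your finite-dimensional target never gets to act on $a$. Separately, the uniform-in-$n$ operator-norm bound $\|\tfrac1R\sum_r u_rxu_r^*-\tr(x)1\|\le C\|x\|R^{-1/2}$ for \emph{all} $x\in\IM_n$ is the operator-norm quantum-expander problem; you rightly sense that random unitaries lose a logarithm, but you offer nothing that closes this, and what you would actually need is unitaries that mix strongly with the fixed element $a$ (a freeness-type condition relative to $a$) --- the opposite of approximate centrality. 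Finally, you flag but do not resolve the passage from $A^{**}_{\mathrm f}$ back to $A$ without inflating $R$; the Kaplansky--Hahn--Banach step of Theorem~\ref{thm:qts} destroys the count, and that is exactly the obstruction the theorem is meant to circumvent.

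The paper's proof takes a genuinely different route that never proves a quantitative Dixmier property. It first invokes Theorem~\ref{thm:com} to write $a\approx\sum_{i=1}^k[b_i,c_i]$ with $\sum_i\|b_i\|\|c_i\|\le C\|a\|$ but $k$ uncontrolled, and the real content is the compression of $k$ commutators into $R$. This is Lemma~\ref{lem:ss}: for a free semicircular family $S_i(r)$ one sets $\tilde b(r)=\sum_iS_i(r)\otimes b_i$ and obtains $\tfrac1R\sum_r[\tilde b(r),\tilde c(r)]\approx 1\otimes\sum_i[b_i,c_i]$ up to $6R^{-1/2}\sum_i\|b_i\|\|c_i\|$ --- the $R^{-1/2}$ cancellation happens over the index $i$ of the many commutators, not over conjugates of $a$. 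Haagerup--Thorbj{\o}rnsen together with exactness replaces the semicircular elements by matrices $s^{(n)}_i(r)\in\IM_n$, and the dimension-drop structure of $\cZ$ (the copies of $t\IM_q$ and $(1-t)\IM_p$ inside $I(p,q)$) absorbs $\IM_n$ into $A=\cZ\otimes A_0$, at the cost of doubling the number of commutators. To salvage your outline you would have to actually prove the $R$-unitary averaging statement inside $A$ itself, and nothing you have written does that.
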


\begin{proof}[Proof of Theorem~\ref{thm:com}]
Let $a\in A$.
We denote by $\ctr$ the center-valued trace from the second dual von Neumann algebra
$A^{**}$ onto the center $\cZ(A^{**}_{\mathrm{f}})$ of the finite summand $A^{**}_{\mathrm{f}}$
of $A^{**}$.
One has $\| \ctr(a) \| = \sup_{\tau\in T(A)}|\tau(a)|<\e$ and $a' := a-\ctr(a)$ has
zero traces. By a theorem of Fack and de la Harpe, for $C=2\cdot12^2$ and $m=10$,
there are $b_i,c_i\in A^{**}$ such that $\sum_{i=1}^m\|b_i\|\|c_i\|\le C\|a\|$ and
$ a' = \sum_{i=1}^m [b_i,c_i]$. See \cite{marcoux,pop} for a better estimate of
$C$ and $m$. By Kaplansky's density theorem, there is a net $(b_i(\lambda))_\lambda$
in $A$ such that $\|b_i(\lambda)\|\le\|b_i\|$ and $b_i(\lambda)\to b_i$ ultra\-strongly.
Likewise for $(c_i(\lambda))_\lambda$.
Since
\[
\| \lim_\lambda ( a - \sum_{i=1}^m [b_i(\lambda),c_i(\lambda)] ) \| = \| a - a' \| <\e,
\]
there is $a''\in\conv \{\sum_{i=1}^m [b_i(\lambda),c_i(\lambda)] \}_\lambda$
which satisfies $\|a-a''\|<\e$.
\end{proof}

The proof of Theorem~\ref{thm:comreg} is inspired by \cite{haagerup:new} and uses
the free semicircular system and random matrix argument of
Haagerup--Thorbj{\o}rnsen (\cite{ht}).
Let $\cO_\infty$ be the Cuntz algebra generated by isometries $l_i(r)$
such that $l_i(r)^*l_j(s)=\delta_{i,j}\delta_{r,s}$, and let
$S_i(r) := l_i(r)+l_i(r)^*$ be the corresponding semicircular system.
We note that $\fC:=\mathrm{C}^*(\{S_i(r) : i,r\})$ is $*$-isomorphic to the reduced free
product of the copies of $C([-2,2])$ with respect to the Lebesgue measure
(see Section 2.6 in \cite{vdn}), and the corresponding tracial state coincides
with the restriction of the vacuum state on $\cO_\infty$ to $\fC$.

\begin{lem}\label{lem:ss}
Let $b_i,c_i\in A$ be such that $\|b_i\|=\|c_i\|$.
Then, for every $R\in\IN$, letting
$\tilde{b}(r)=\sum_{i=1}^n S_i(r)\otimes b_i$
and $\tilde{c}(r)=\sum_{j=1}^n S_j(r)\otimes c_j$,
one has
\[
\frac{1}{R}\sum_{r=1}^R \|\tilde{b}(r)\|\|\tilde{c}(r)\|\le 4\sum\|b_i\|\|c_i\|
\]
and
\[
\| 1\otimes\sum_{i=1}^n[b_i,c_i] - \frac{1}{R}\sum_{r=1}^R [\tilde{b}(r),\tilde{c}(r)]\|
\le\frac{6}{\sqrt{R}}\sum_i\|b_i\|\|c_i\|.
\]
\end{lem}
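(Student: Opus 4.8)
The plan is to exploit the Cuntz-type relations $l_i(r)^*l_j(s)=\delta_{ij}\delta_{rs}$, which make all the relevant norms computable by the usual ``row/column'' trick. For the first inequality it suffices to bound each factor uniformly in $r$. Splitting $\tilde b(r)=\sum_i l_i(r)\otimes b_i+\sum_i l_i(r)^*\otimes b_i$ into its creation and annihilation parts, the relations give $(\sum_i l_i(r)\otimes b_i)^*(\sum_j l_j(r)\otimes b_j)=1\otimes\sum_i b_i^*b_i$ and $(\sum_i l_i(r)^*\otimes b_i)(\sum_j l_j(r)^*\otimes b_j)^*=1\otimes\sum_i b_ib_i^*$, so each part has norm at most $(\sum_i\|b_i\|^2)^{1/2}$ and hence $\|\tilde b(r)\|\le 2(\sum_i\|b_i\|^2)^{1/2}$, uniformly in $r$; the same holds for $\tilde c(r)$. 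Multiplying and using $\|b_i\|=\|c_i\|$ with Cauchy--Schwarz turns $(\sum_i\|b_i\|^2)^{1/2}(\sum_i\|c_i\|^2)^{1/2}$ into $\sum_i\|b_i\|\|c_i\|$, giving the factor $4$ after averaging.

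For the second inequality I would first expand $[\tilde b(r),\tilde c(r)]=\sum_{i,j}S_i(r)S_j(r)\otimes d_{ij}$ with $d_{ij}:=b_ic_j-c_ib_j$, and then use $S_i(r)S_j(r)=l_i(r)l_j(r)+l_i(r)l_j(r)^*+\delta_{ij}1+l_i(r)^*l_j(r)^*$. The scalar terms $\delta_{ij}1$ reassemble precisely into $1\otimes\sum_i[b_i,c_i]$, so the whole error is the average over $r$ of three families: $X$ from the two-creation terms, $Y$ from the mixed terms $l_i(r)l_j(r)^*$, and $Z$ from the two-annihilation terms. The task reduces to bounding $\|X\|,\|Y\|,\|Z\|$.

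The terms $X$ and $Z$ are the easy ones. Writing $X=\tfrac1R\sum_r T_r$ with $T_r=\sum_{i,j}l_i(r)l_j(r)\otimes d_{ij}$, the relations collapse $T_r^*T_s$ to $\delta_{rs}\bigl(1\otimes\sum_{i,j}d_{ij}^*d_{ij}\bigr)$, whence $\|\sum_r T_r\|=\sqrt R\,\|\sum_{i,j}d_{ij}^*d_{ij}\|^{1/2}$ and $\|X\|=R^{-1/2}\|\sum_{i,j}d_{ij}^*d_{ij}\|^{1/2}$. The column norm $\|\sum_{i,j}d_{ij}^*d_{ij}\|^{1/2}$ is bounded by $2\sum_i\|b_i\|\|c_i\|$ by splitting $d_{ij}=b_ic_j-c_ib_j$ and estimating, e.g., $\|\sum_j c_j^*(\sum_i b_i^*b_i)c_j\|\le\|\sum_i b_i^*b_i\|\,\|\sum_j c_j^*c_j\|\le(\sum_i\|b_i\|^2)(\sum_j\|c_j\|^2)$. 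The same computation with $W_rW_s^*=\delta_{rs}(1\otimes\sum_{i,j}d_{ij}d_{ij}^*)$ handles $Z$, giving $\|X\|,\|Z\|\le 2R^{-1/2}\sum_i\|b_i\|\|c_i\|$.

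The main obstacle is the degree-preserving term $Y=\tfrac1R\sum_r U_r$, $U_r=\sum_{i,j}l_i(r)l_j(r)^*\otimes d_{ij}$, since the single-index cancellation that produced the $\sqrt R$ saving for $X$ and $Z$ is not directly available. The key observation is that here one gets \emph{double} orthogonality: the relations force both $U_r^*U_s=0$ and $U_rU_s^*=0$ for $r\ne s$, so the positive operators $U_r^*U_r$ have mutually orthogonal supports and $\|\sum_r U_r\|=\max_r\|U_r\|$. A single $\|U_r\|$ is then estimated by factoring $U_r=\mathcal L\,(d_{ij})\,\mathcal L^*$ through the isometric row $\mathcal L=(l_1(r),\dots,l_n(r))$ (isometric because $\mathcal L^*\mathcal L=(\delta_{ij})$), giving $\|U_r\|\le\|(d_{ij})\|_{\IM_n(A)}\le 2\sum_i\|b_i\|\|c_i\|$ by the same row/column bound applied to the rank-one-type pieces $(b_ic_j)$ and $(c_ib_j)$. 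Hence $\|Y\|\le 2R^{-1}\sum_i\|b_i\|\|c_i\|$. Combining, the error is at most $\bigl(2R^{-1/2}+2R^{-1}+2R^{-1/2}\bigr)\sum_i\|b_i\|\|c_i\|\le 6R^{-1/2}\sum_i\|b_i\|\|c_i\|$ for $R\in\IN$, which is the claimed estimate. I expect the bookkeeping of the three families and, in particular, the recognition that $Y$ needs the double-orthogonality argument rather than the single-index one, to be the delicate part.
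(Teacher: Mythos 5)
Your proposal is correct and follows essentially the same route as the paper: the same expansion of $S_i(r)S_j(r)$ into scalar, two-creation, mixed, and two-annihilation parts, the same orthogonality-over-$r$ computation giving the $\sqrt{R}$ saving on the pure creation/annihilation terms, and the same $\max_r$ observation for the mixed term. The only cosmetic difference is that you assemble the commutator coefficients $d_{ij}=b_ic_j-c_ib_j$ before estimating, whereas the paper bounds $\tilde{b}(r)\tilde{c}(r)$ and $\tilde{c}(r)\tilde{b}(r)$ separately; the constants come out the same.
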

\begin{proof}
For every $r$, one has
\begin{align*}
\| \tilde{b}(r)\|
&\le \|\sum l_i(r)\otimes b_i\|+\|\sum l_i(r)^*\otimes b_i\|\\
&= \|\sum b_i^*b_i\|^{1/2}+\|\sum b_ib_i^*\|^{1/2}
\le 2(\sum\|b_i\|^2)^{1/2},
\end{align*}
and likewise for $\tilde{c}(r)$.
It follows that $\|\tilde{b}(r)\|\|\tilde{c}(r)\|\le4\sum\|b_i\|\|c_i\|$.
Moreover,
\begin{align*}
\tilde{b}(r)\tilde{c}(r)
 = \sum_{i,j}(\delta_{i,j}1+l_i(r)l_j(r)+l_i(r)l_j(r)^*+l_i(r)^*l_j(r)^*)\otimes b_ic_j,
\end{align*}
and
\begin{align*}
\|\sum_{r,i,j}l_i(r)l_j(r)\otimes b_ic_j\|
&= \|\sum_{r,i,j} c_j^*b_i^*b_ic_j\|^{1/2}
\le R^{1/2}\sum_i\|b_i\|\|c_i\|,\\
\|\sum_{r,i,j}l_i(r)^*l_j(r)^*\otimes b_ic_j\|
&= \|\sum_{r,i,j} b_ic_jc_j^*b_i^*\|^{1/2}
\le R^{1/2}\sum_i\|b_i\|\|c_i\|,\\
\|\sum_{r,i,j}l_i(r)l_j(r)^*\otimes b_ic_j\|
&= \max_r \| \sum_{i,j}l_i(r)l_j(r)^*\otimes b_ic_j\|
\le \sum_i\|b_i\|\|c_i\|.
\end{align*}
Likewise for $\tilde{c}(r)\tilde{b}(r)$, and one obtains the conclusion.
\end{proof}

\begin{proof}[Proof of Theorem~\ref{thm:comreg}]
Let $a\in A\setminus\{0\}$ be such that $\sup_{\tau\in T(A)}|\tau(a)|<\e$.
Since $\cZ\cong\cZ^{\otimes\infty}$, we may assume that $A=\cZ\otimes A_0$ and $a\in A_0$.
By Theorem~\ref{thm:com}, there are $b_i,c_i$ such that $\|b_i\|=\|c_i\|$,
$\sum_{i=1}^k\|b_i\|\|c_i\|\le C\|a\|$, and $\| a - \sum_{i=1}^k[b_i,c_i]\|<\e$.
Recall the theorem of Haagerup and Thorbj{\o}rnsen (\cite{ht}) which states
that the $\mathrm{C}^*$-alge\-bra $\fC$ can be embedded into $\prod \IM_n/\bigoplus\IM_n$.
By exactness of $A_0$, there is a canonical $*$-iso\-mor\-phism
\[
(\prod \IM_n/\bigoplus\IM_n)
\otimes A_0 \cong ((\prod \IM_n)\otimes A_0)/(\bigoplus\IM_n\otimes A_0).
\]
Lemma~\ref{lem:ss}, combined with this fact, implies that
there are matrices $s^{(n)}_i(r)\in\IM_n$ such that
$\tilde{b}^{(n)}(r)=\sum_{i=1}^k s^{(n)}_i(r)\otimes b_i$
and $\tilde{c}^{(n)}(r)=\sum_{j=1}^k s^{(n)}_j(r)\otimes c_j$
satisfy
\[
\limsup_n\frac{1}{R}\sum_{r=1}^R \|\tilde{b}^{(n)}(r)\|\|\tilde{c}^{(n)}(r)\|
 \le 4\sum\|b_i\|\|c_i\|\le 4C\|a\|
\]
and
\[
\limsup_n\| 1\otimes a - \frac{1}{R}\sum_{r=1}^R [\tilde{b}^{(n)}(r),\tilde{c}^{(n)}(r)]\|
\le\e+\frac{6C\|a\|}{\sqrt{R}}.
\]
For every relatively prime $p,q\in\IN$, the Jiang--Su algebra $\cZ$ contains
the prime dimension drop algebra
\[
I(p,q)=\{ f \in C([0,1],\IM_p\otimes\IM_q) : f(0)\in\IM_p\otimes\IC1,\,f(1)\in\IC1\otimes\IM_q\}
\]
and hence $t\IM_q$ and $(1-t)\IM_p$ also, where $t\in I(p,q)$ is the identity function on $[0,1]$.
It follows that there are $b(r),c(r),b'(r),c'(r)\in \cZ\otimes A_0$ such that
\[
\frac{1}{R}\sum_{r=1}^R (\| b(r)\|\| c(r)\|+\| b'(r)\|\| c'(r)\|)< 9C\|a\|
\]
and
\[
\| a - \frac{1}{R}\sum_{r=1}^R ([b(r),c(r)]+[b'(r),c'(r)])\|
<\e+\frac{7C\|a\|}{\sqrt{R}}.
\]
Here, we note that $\|t\otimes x+(1-t)\otimes y\|=\max\{\|x\|,\|y\|\}$ for any $x$ and $y$.
\end{proof}

Let $(A_n)_n$ be a sequence of $\mathrm{C}^*$-alge\-bras and $\cU$ be a free ultra\-filter on $\IN$.
We denote by
\[
\prod A_n =\{ (a_n)_{n=1}^\infty : a_n\in A_n,\,\sup_n\|a_n\|<+\infty\}
\]
the $\ell_\infty$-direct sum of $(A_n)$, and by
\[
\prod A_n/\cU = (\prod A_n) / \{ (a_n)_{n=1}^\infty : {\lim}_{\cU}\|a_n\|=0\}
\]
the ultra\-product of $A_n$.
For every $m$, we view $\tau \in T(A_m)$ as an element of $T(\prod A_n)$
by $\tau((a_n)_n)=\tau(a_m)$. For each $(\tau_n)_n\in\prod T(A_n)$,
there is a corresponding tracial state
$\tau_\cU := {\lim}_{\cU}\tau_n$ on $\prod A_n/\cU$, defined by
\[
\tau_{\cU}((a_n)_n) = {\lim}_{\cU}\tau_n(a_n).
\]
The set of tracial states that arise in this way is denoted by $\prod T(A_n)/\cU$.
We note that as soon as $\partial T(\prod A_n/\cU)$ is infinite,
the inclusion $\prod T(A_n)/\cU\subset T(\prod A_n/\cU)$ is proper (see \cite{bf}).
Moreover, if we take $A_n$ to be the counterexamples of Pedersen and Petersen (\cite{pp}),
then $\prod T(A_n)/\cU$ (resp.\ $\conv\bigsqcup T(A_n)$)
is not weak$^*$-dense in $T(\prod A_n/\cU)$ (resp.\ $T(\prod A_n)$).
The following theorem is proved by Sato \cite{sato:discrete} (see also \cite{rordam})
in the case where $A$ is a simple nuclear $\mathrm{C}^*$-alge\-bra having finitely many extremal tracial states.

\begin{thm}\label{thm:ultra}
Let $(A_n)_n$ be a sequence of exact $\cZ$-stable $\mathrm{C}^*$-alge\-bras and $\cU$ be
a free ultra\-filter on $\IN$. Then, $\prod T(A_n)/\cU$ (resp.\ $\conv\bigsqcup T(A_n)$)
is weak$^*$-dense in $T(\prod A_n/\cU)$ (resp.\ $T(\prod A_n)$).
In particular, for every $\tau\in T(\prod A_n/\cU)$ and every
separable $\mathrm{C}^*$-sub\-alge\-bra $B\subset\prod A_n/\cU$, there
is $\tau'\in \prod T(A_n)/\cU$ such that $\tau|_B=\tau'|_B$.
\end{thm}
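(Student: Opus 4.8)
The plan is to deduce the displayed ``in particular'' clause from the density assertion, and to obtain density by a Hahn--Banach/bipolar argument reducing everything to a single one-sided inequality between support functions. Since $\prod T(A_n)/\cU$ is a convex subset of the weak$^*$-compact set $T(\prod A_n/\cU)$, its weak$^*$-closure equals $T(\prod A_n/\cU)$ if and only if the two sets have the same support function on self-adjoint elements; as the support function of $\prod T(A_n)/\cU$ at a self-adjoint $a=(a_n)_n$ is readily computed to be $\lim_\cU\sup_{\sigma\in T(A_n)}\sigma(a_n)$, density amounts to
\[
\tau(a)\le{\lim}_\cU\sup_{\sigma\in T(A_n)}\sigma(a_n)=:\beta
\qquad(\tau\in T(\prod A_n/\cU)).
\]
Writing $\beta_n=\sup_{\sigma\in T(A_n)}\sigma(a_n)$ and replacing $a_n$ by $a_n'=a_n-\beta_n 1$, it suffices to show that $\sup_{\sigma\in T(A_n)}\sigma(a_n')\le0$ for all $n$ forces $\tau(a')\le0$. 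The engine for passing between the coordinatewise tracial data and the exotic trace $\tau$ will be the following consequence of Theorem~\ref{thm:comreg}, valid since the $A_n$ are exact and $\cZ$-stable: if $x=(x_n)_n$ satisfies $\lim_\cU\sup_{\sigma\in T(A_n)}|\sigma(x_n)|\le\eta$, then Theorem~\ref{thm:comreg} writes each $x_n$ as an $R$-fold sum of commutators up to error $\eta+C\|x_n\|R^{-1/2}$; assembling these (their coefficients are norm-bounded) into $\prod A_n/\cU$ and letting $R\to\infty$ realizes $x$ as a norm-limit of sums of commutators up to $\eta$, whence $|\tau(x)|\le\eta$.

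Next I would produce, for each $\delta>0$, an honest averaging that makes the positive part tracially small while preserving all coordinate traces. Fix $n$ and let $C_n=\conv\{\tfrac1k\sum_j u_j a_n' u_j^*:u_j\in\cU(A_n)\}\subset A_n$; each $w\in C_n$ has $\sigma(w)=\sigma(a_n')$ for all $\sigma\in T(A_n)$. I claim
\[
\inf_{w\in C_n}\ \sup_{\sigma\in T(A_n)}\sigma\big((w)_+\big)=0 .
\]
The function $(w,\sigma)\mapsto\sigma((w)_+)$ is convex in $w$ (a trace of the convex function $t\mapsto t_+$) and weak$^*$-continuous and affine in $\sigma$, so Sion's minimax theorem over the convex $C_n$ and the convex weak$^*$-compact $T(A_n)$ lets me interchange $\inf_w$ and $\sup_\sigma$. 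For fixed $\sigma$ I pass to the GNS representation $\pi_\sigma$, whose weak closure $M_\sigma=\pi_\sigma(A_n)''$ is a finite von Neumann algebra; every normal tracial state of $M_\sigma$ restricts through $\pi_\sigma$ to an element of $T(A_n)$, so the hypothesis yields $\ctr_{M_\sigma}(\pi_\sigma(a_n'))\le0$, and the Dixmier approximation theorem in $M_\sigma$ (with Kaplansky density of $\pi_\sigma(\cU(A_n))$ in $\cU(M_\sigma)$) produces honest averages $w\in C_n$ with $\pi_\sigma(w)\to\ctr_{M_\sigma}(\pi_\sigma(a_n'))$ ultrastrongly, hence $\sigma((w)_+)=\bar\sigma((\pi_\sigma w)_+)\to0$. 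Thus $\inf_{w\in C_n}\sigma((w)_+)=0$ for every $\sigma$, and the minimax identity delivers a single $w_n\in C_n$ with $\sup_{\sigma}\sigma((w_n)_+)<\delta$.

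Finally I would combine the two ingredients. Since $\sigma(w_n-a_n')=0$ for all $\sigma$, the lemma of the first paragraph gives $\tau(w)=\tau(a')$ where $w=(w_n)_n$; and since $\sup_\sigma\sigma((w_n)_+)<\delta$ with $(w_n)_+\ge0$ (so the absolute value is controlled too), the same lemma gives $\tau\big(((w_n)_+)_n\big)\le\delta$. As $\tau(w)=\tau\big(((w_n)_+)_n\big)-\tau\big(((w_n)_-)_n\big)\le\tau\big(((w_n)_+)_n\big)$, I conclude $\tau(a')=\tau(w)\le\delta$, and letting $\delta\to0$ gives $\tau(a')\le0$. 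The ``resp.'' statement for $\prod A_n$ and $\conv\bigsqcup T(A_n)$ is proved identically with $\lim_\cU$ replaced by $\sup_n$, and the final ``in particular'' clause follows from weak$^*$-density together with metrizability of the weak$^*$-topology on restrictions to the separable $B$ and a standard diagonal selection across $\cU$. I expect the main obstacle to be precisely the one-sidedness in the middle step: the hypothesis controls $a_n'$ only on the finite (tracial) part of $A_n^{**}$ and says nothing on the properly infinite part, so a naive Dixmier average need not be small in norm; the device that circumvents this is to test only against the finite GNS representations $\pi_\sigma$ and to let Sion's minimax upgrade the pointwise-in-$\sigma$ averaging to a single uniform one, after which Theorem~\ref{thm:comreg} transports the estimate to the exotic trace $\tau$.
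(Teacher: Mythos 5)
Your proposal is correct in substance and shares the paper's engine: both arguments reduce, via Hahn--Banach, to a support-function inequality for self-adjoint elements, and both rest decisively on Theorem~\ref{thm:comreg}, whose uniform bound $R$ on the number of commutators is exactly what allows the coordinatewise decompositions to be assembled into elements $b(r),c(r)$ of the product so that the exotic trace $\tau$ is controlled by $\lim_\cU\sup_{\sigma\in T(A_n)}|\sigma(a_n)|$ (your ``lemma'' of the first paragraph is precisely the paper's computation $\tau(a)=\tau(a-\sum_r[b(r),c(r)])\le\|a-\sum_r[b(r),c(r)]\|$). Where you genuinely diverge is in handling the one-sidedness that you correctly flag as the main obstacle. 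The paper dispatches it with a one-line perturbation: having found $a_0$ with $\gamma=\tau(a_0)-\sup_{\sigma\in\Sigma}\sigma(a_0)>0$, it adds $b\in A_+$ with $\tau(b)=\alpha$ and $\|b\|<\alpha+\gamma$, where $\alpha=(|\inf_{\sigma\in\Sigma}\sigma(a_0)|-\tau(a_0))\vee 0$, so that $a=a_0+b$ already satisfies the two-sided bound $\sup_{\sigma\in\Sigma}|\sigma(a)|<\tau(a)$ and Theorem~\ref{thm:comreg} applies directly. You instead run, in each coordinate, a Dixmier averaging tested against the finite GNS representations, upgraded by Sion's minimax to a single $w_n$ in the convex hull of unitary conjugates of $a_n'$ with $\sup_\sigma\sigma((w_n)_+)<\delta$, and then invoke the commutator lemma twice (for $w_n-a_n'$ and for $(w_n)_+$). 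This route is valid --- the convexity of $w\mapsto\sigma(w_+)$ for tracial $\sigma$ is the standard trace-convexity inequality, and the pointwise-in-$\sigma$ infimum vanishes by Dixmier plus Kaplansky density --- but it is substantially heavier machinery for that step, and it quietly uses a unit (for $a_n-\beta_n 1$, for $\cU(A_n)$, and for the weak$^*$-compactness of $T(A_n)$ required by Sion), whereas the paper's perturbation works without one. The ``in particular'' clause is treated identically in both: density applied to a countable dense subset of $B$ followed by a diagonal selection over a decreasing chain of sets in $\cU$, which is the one step your summary leaves implicit but which is routine.
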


\begin{proof}[Proof of Theorem~\ref{thm:ultra}]
Let $A$ be either $\prod A_n$ or $\prod A_n/\cU$, and denote by $\Sigma\subset T(A)$
either $\conv(\bigsqcup T(A_n))$ or $\prod T(A_n)/\cU$ accordingly.
Suppose that the conclusion of the theorem is false for $\Sigma\subset T(A)$.
Then, by the Hahn--Banach theorem, there are $\tau$
in $T(A)$ and a self-adjoint element $a_0$ in $A$ such that
$\gamma:=\tau(a_0) - \sup_{\sigma\in\Sigma} \sigma(a_0) > 0$.
Let $\alpha = (|\inf_{\sigma\in\Sigma}\sigma(a_0)| - \tau(a_0)) \vee 0$, and
take $b\in A_+$ such that $\tau(b)=\alpha$ and $\|b\|<\alpha+\gamma$.
It follows that $a=a_0+b$ satisfies $\sup_{\sigma\in\Sigma}|\sigma(a)|<\tau(a)$.
Now, expand $a\in A$ as $(a_n)_n$. We may assume that $\|a_n\|\le\|a\|$ for all $n$.
Let $I\in\cU$ (or $I=\IN$ in case $A=\prod A_n$)
be such that $\e_0:=\sup_{n\in I}\sup_{\sigma\in T(A_n)}\sigma(a_n) < \tau(a)$.
Let $R\in\IN$ be such that $\e_1:=\e_0 + C\|a\|R^{-1/2}<\tau(a)$.
Then, by Theorem~\ref{thm:comreg}, for each $n\in I$
there are $b_n(r),c_n(r)\in A_n$ such that
$\sum_{r=1}^R \|b_n(r)\|\|c_n(r)\|\le C\|a\|$ and
$\| a_n - \sum_{r=1}^R [b_n(r),c_n(r)] \|\le\e_1$.
It follows that for $b(r)=(b_n(r))_n$ and $c(r)=(c_n(r))_n\in A$, one has
\[
\tau(a)= \tau(a - \sum_{r=1}^R [b(r),c(r)])\le\| a - \sum_{r=1}^R [b(r),c(r)]\| <\tau(a),
\]
which is a contradiction. This proves the first half of the theorem.

For the second half, let $\tau$ and $B$ be given.
Take a dense sequence $(x(i))_{i=0}^\infty$ in $B$ and expand them as
$x(i)=(x_n(i))_n$.
By the first half, for every $m$, there is $(\tau^{(m)}_n)_n \in \prod T(A_n)$
such that $|\tau(x(i))-\tau^{(m)}_\cU(x(i))|<m^{-1}$ for $i=0,\ldots,m$.
Let
\[
I_m = \{ n\in\IN : |\tau(x(i))-\tau^{(m)}_n(x_n(i))|<m^{-1} \mbox{ for all $i=0,\ldots,m$}\} \in \cU
\]
(so $I_0=\IN$), and $J_m=\bigcap_{l=0}^m I_l \in \cU$.
We define $\tau_n$ to be $\tau^{(m)}_n$ for $n\in J_m\setminus J_{m+1}$.
It is not too hard to check $\tau=\tau_{\cU}$ on $B$.
\end{proof}

In passing, we record the following fact.
\begin{lem}
Let $A$ be a (non-separable) $\mathrm{C}^*$-alge\-bra and $X\subset A$ be a separable subset.
Then there is a separable $\mathrm{C}^*$-sub\-alge\-bra $B\subset A$ that contains $X$
and such that the restriction from $T(A)$ to $T(B)$ is onto.
\end{lem}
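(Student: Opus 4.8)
The plan is to construct $B$ as the norm-closure of an increasing union $B_0\subseteq B_1\subseteq\cdots$ of separable $\mathrm{C}^*$-subalgebras, beginning with $B_0=\mathrm{C}^*(X)$, via a separable reflection (L\"owenheim--Skolem style) argument. First I would reduce to the case that $A$ is unital: if $A$ is non-unital I apply the unital case to the unitization $\tilde A$ with $X\cup\{1\}$ in place of $X$, obtain a unital separable $\tilde B\subseteq\tilde A$ whose trace restriction is onto, and then take $B=\tilde B\cap A$ (which satisfies $\tilde B=B+\IC1$); a positive trace on $A$ restricting to the norm-one $\sigma\in T(B)$ automatically has norm one, so this transfers the conclusion. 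For unital $A$ I would reformulate the goal. Since $T(A)$ is weak$^*$-compact convex, its image $r(T(A))$ under the restriction map $r\colon T(A)\to T(B)$ is weak$^*$-compact convex, so by Hahn--Banach a tracial state $\sigma$ on $B$ lies in $r(T(A))$ as soon as $\sigma(b)\le\sup_{\tau\in T(A)}\tau(b)=:\beta(b)$ for every self-adjoint $b\in B$. Hence it suffices to build $B$ with $\sup_{\sigma\in T(B)}\sigma(b)=\beta(b)$ for all self-adjoint $b\in B$. Note that enlarging a subalgebra only decreases the left-hand supremum, while it always dominates $\beta(b)$, so the task is precisely to drive it down.

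The single reflection step is where the analysis enters, and it rests on a one-sided version of the commutator fact recalled at the start of Section~\ref{sec:com}: a simple Hahn--Banach/bipolar argument shows that a self-adjoint $a$ satisfies $\tau(a)\le0$ for all $\tau\in T(A)$ if and only if $a$ lies in the norm-closure of $\{h-p: h\in[A,A],\ h=h^*,\ p\in A_+\}$ (the polar cone of that set being exactly the positive traces). Given a self-adjoint $b\in B_n$ and $\e>0$, I apply this to $a=b-\beta(b)\cdot 1$, for which $\tau(a)\le0$ for all $\tau$, obtaining finitely many $x_i,y_i\in A$ and $p\in A_+$ with $\|b-\beta(b)1-\sum_i[x_i,y_i]+p\|<\e$, and I adjoin these finitely many elements to form $B_{n+1}$. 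Any $\sigma\in T(B_{n+1})$ then kills the commutators and is positive on $p$, so $\sigma(b)\le\beta(b)+\e$. Thus one finitely generated enlargement lowers the supremum at $b$ to within $\e$ of $\beta(b)$, which is exactly the reflection I need.

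Running a routine diagonal bookkeeping over a countable dense self-adjoint subset of each $B_n$, treating each element with $\e\to0$, and using that both $b\mapsto\beta(b)$ and $b\mapsto\sup_{T(B)}\sigma(b)$ are $1$-Lipschitz, I obtain in the limit $B=\overline{\bigcup_n B_n}$ the equality $\sup_{\sigma\in T(B)}\sigma(b)=\beta(b)$ for every self-adjoint $b\in B$; the Hahn--Banach reformulation then finishes the unital case. The degenerate possibility $T(A)=\emptyset$ is absorbed into the same scheme, since running the step with $\beta$ replaced by any rational upper bound forces $\sigma(1)<1$ and hence empties $T(B)$, making the restriction vacuously onto, while conversely $T(B)\ne\emptyset$ forces $\beta(1)\ge1$ and hence $T(A)\ne\emptyset$ with all $\beta(b)$ finite. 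The main obstacle, and the only nonformal point, is this finite-witness reflection: the real content is that one can push $\sup_{T(B')}$ down towards $\sup_{T(A)}$ by adjoining finitely many elements at each stage, which is precisely what the commutator/Hahn--Banach input supplies; the remaining task of assembling a single extending trace out of the matched suprema is then handled cleanly by compactness of $T(A)$ together with separation.
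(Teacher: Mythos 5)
Your proof is correct, but it reaches the conclusion by a genuinely different route than the paper. The paper's reflection step is soft: to find a separable $C$ on which traces approximately extend over a finite set $x_1,\dots,x_n$ within $\e$, it argues by contradiction, choosing a bad trace $\tau_C$ on every separable subalgebra $C$ and extracting a weak$^*$ limit point along the upward-directed family of all separable subalgebras, which is then a tracial state on all of $A$ that is $\e$-far from $T(A)$ on the $x_i$ --- absurd. It then iterates this to get $\Res_{B_n}T(B_{n+1})=\Res_{B_n}T(A)$ and assembles the extending trace by a final compactness argument. You replace the non-constructive step by an explicit finite-witness argument: the Hahn--Banach/bipolar description of $\{a=a^*:\tau(a)\le 0\ \forall\tau\in T(A)\}$ as the closure of $\{h-p: h\in[A,A]_{\mathrm{sa}},\ p\in A_+\}$ (the one-sided variant of the Cuntz--Pedersen fact recalled at the start of Section~\ref{sec:com}, cf.\ \cite{cp}) hands you finitely many commutators and a positive element to adjoin, after which every trace on the enlarged algebra is forced to satisfy $\sigma(b)\le\beta(b)+\e$; the surjectivity of the restriction then falls out of separating a point from the compact convex set $r(T(A))$. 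Both arguments use compactness of $T(A)$ in the end, and both share the same outer skeleton (iterate a reflection step and close up the union), so the real difference is in the engine. The paper's version is shorter and is the standard separable-reflection template, applicable verbatim to many other separably inheritable properties; yours is quantitative and constructive, makes visible exactly which elements must be adjoined and why finitely many suffice per $(b,\e)$, and ties the lemma to the commutator theme of Section~\ref{sec:com}, at the modest cost of the extra bookkeeping you correctly supply (the unital reduction, the $T(A)=\emptyset$ degeneracy, and the $1$-Lipschitz passage to the limit).
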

\begin{proof}
We may assume that $A$ is unital.
We first claim that for every $x_1,\ldots,x_n \in A$ and $\e>0$,
there is a separable $\mathrm{C}^*$-sub\-alge\-bra $C$ which satisfies the following property:
for every $\tau\in T(C)$ there is $\sigma\in T(A)$ such that
$\max_i|\tau(x_i) - \sigma(x_i)|<\e$.
Indeed if this were not true, then for every $C$ there is $\tau_C\in T(C)$
such that $\max_i|\tau_C(x_i) - \sigma(x_i)|\geq\e$ for all $\sigma\in T(A)$.
The set of separable $\mathrm{C}^*$-sub\-alge\-bras of $A$ is upward directed and
one can find a limit point $\tau$ of $\{\tau_C\}$.
Then, we arrive at a contradiction that $\tau\in T(A)$ satisfies
$\max_i|\tau(x_i) - \sigma(x_i)|\geq\e$ for all $\sigma\in T(A)$.
We next claim that for every separable $\mathrm{C}^*$-sub\-alge\-bra $B_0\subset A$,
there is a separable $\mathrm{C}^*$-sub\-alge\-bra $B_1\subset A$ that contains $B_0$
and such that $\Res_{B_0} T(B_1) = \Res_{B_0} T(A)$ in $T(B_0)$,
where $\Res$ is the restriction map.
Take a dense sequence $x_1,x_2,\ldots$ in $B_0$, and let $C_0=B_0$.
By the previous discussion, there is an increasing sequence of
separable $\mathrm{C}^*$-sub\-alge\-bras $C_0\subset C_1\subset\cdots$ such that
for every $\tau\in T(C_n)$ there is $\sigma\in T(A)$ satisfying
$|\tau(x_i) - \sigma(x_i)|<n^{-1}$ for $i=1,\ldots,n$.
Now, letting $B_1=\overline{\bigcup_n C_n}$ and we are done.
Finally, we iterate this construction and obtain
$X\subset B_0\subset B_1\subset\cdots$ such that
$\Res_{B_n} T(B_{n+1}) = \Res_{B_n} T(A)$.
The separable $\mathrm{C}^*$-sub\-alge\-bra $B=\overline{\bigcup B_n}$ satisfies the desired property.
\end{proof}

Murphy (\cite{murphy}) presents a non-separable example of
a unital non-simple $\mathrm{C}^*$-alge\-bra with a unique faithful tracial state and asks
whether a separable example of such exists. The above lemma answers it.
There is another example, which is moreover nuclear.
Kirchberg (\cite{kirchberg}) proves that the Cuntz algebra $\cO_\infty$
(or any other unital separable exact $\mathrm{C}^*$-alge\-bra) is a subquotient of the CAR algebra $\IM_{2^\infty}$.
Namely, there are $\mathrm{C}^*$-sub\-alge\-bras $J$ and $B$ in $\IM_{2^\infty}$ such that
$J$ is hereditary in $\IM_{2^\infty}$ and is an ideal in $B$ such that $B/J=\cO_\infty$.
It follows that $B$ is a unital separable nuclear non-simple $\mathrm{C}^*$-alge\-bra
with a unique faithful tracial state.

\section{Uniform $2$-norm and the completion}\label{sec:u2}

Recall $S\subset T(A)$, $N=(\bigoplus_{\tau\in S}\pi_\tau)(A)''$, and
the center-valued trace $\ctr\colon N\to\cZ(N)$.
Since $S$ is a closed face of $T(A)$, any normal tracial state
on $N$ restricts to a tracial state on $A$ which belongs to $S$.
Hence, one has
\[
\|a\|_{2,S}=\sup\{ \|a\|_{2,\tau} : \tau\in S\}=\sup\{ \|a\|_{2,\tau} : \tau\in \partial S\}=\|\ctr(a^*a)\|^{1/2}.
\]
Since $S$ is a metrizable closed face of the Choquet simplex $T(A)$, it is also a Choquet simplex and
there is a canonical one-to-one correspondence
\[
S\ni\tau\longleftrightarrow\mu_\tau\in\prob(\partial S),\
\tau(a)=\int \lambda(a)\,d\mu_\tau(\lambda)\mbox{ for $a\in A$}.
\]
By uniqueness of the representing measure $\mu_\tau$, this correspondence is an affine
transformation and extends uniquely to a linear order isomorphism
between their linear spans.

\begin{lem}\label{lem:bmac}
For every $\tau\in S$, there is a normal $*$-iso\-mor\-phism
$\theta_\tau\colon L^\infty(\partial S,\mu_\tau)\to \cZ(\pi_\tau(A)'')$
such that
\[
\tau(\theta_\tau(f)a)=\int f(\lambda)\lambda(a)\,d\mu_\tau(\lambda)
\]
for $a\in A$.
\end{lem}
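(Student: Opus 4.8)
The plan is to realize the center $\cZ(\pi_\tau(A)'')$ as the $L^\infty$-space of the \emph{central measure} of $\tau$ and then to identify that measure with the representing measure $\mu_\tau$ by Choquet uniqueness. Write $M_\tau=\pi_\tau(A)''$ and let $\xi_\tau$ be the cyclic GNS vector. Since $\tau$ is a trace, $\xi_\tau$ is cyclic and separating, $M_\tau$ is finite, and $\tau$ extends to a faithful normal tracial state on $M_\tau$ with $\tau(x)=\langle x\xi_\tau,\xi_\tau\rangle$. The center $\cZ(M_\tau)=M_\tau\cap\pi_\tau(A)'$ is abelian and sits inside the commutant $\pi_\tau(A)'$, so I would invoke Tomita's theory of orthogonal measures: the abelian subalgebra $\cZ(M_\tau)\subset\pi_\tau(A)'$ corresponds to an orthogonal probability measure $\mu_c$ on $T(A)$ with barycenter $\tau$ (the central measure), together with a normal $*$-isomorphism $\theta_\tau$ from $L^\infty(\mu_c)$ \emph{onto} $\cZ(M_\tau)$ characterized by
\[
\langle\theta_\tau(f)\pi_\tau(a)\xi_\tau,\xi_\tau\rangle=\int f(\lambda)\,\lambda(a)\,d\mu_c(\lambda)
\]
for $a\in A$ and $f\in L^\infty(\mu_c)$. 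Because $\theta_\tau(f)\in\cZ(M_\tau)\subset M_\tau$, the left-hand side is exactly $\tau(\theta_\tau(f)a)$, so the asserted formula will follow the moment we know $\mu_c=\mu_\tau$.

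It then remains to identify $\mu_c$ with $\mu_\tau$. Concretely $\mu_c$ disintegrates $\tau$ into the states $\omega\circ\ctr_\tau\circ\pi_\tau$, where $\ctr_\tau\colon M_\tau\to\cZ(M_\tau)$ is the center-valued trace and $\omega$ ranges over the characters of $\cZ(M_\tau)$; each such component is tracial, so $\mu_c$ is a representing measure of $\tau$ whose components are traces. Since $S$ is a closed face of $T(A)$ and $\tau\in S$, any representing measure of $\tau$ is carried by $S$, so $\mu_c$ is a probability measure on $S$. Moreover the central measure is carried by the factorial states, and a trace in $S$ is factorial precisely when it is extreme in $S$; hence $\mu_c$ is carried by $\partial S$. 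Finally, $S$ is a metrizable Choquet simplex, so $\tau$ has a unique maximal representing measure, and on a metrizable simplex a representing measure is maximal iff it is carried by $\partial S$ (\cite{alfsen}); therefore $\mu_c=\mu_\tau$. Substituting this into the display and using that $\theta_\tau$ is a normal $*$-isomorphism onto $\cZ(M_\tau)$ completes the proof.

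The main obstacle is precisely the identification of the abstractly defined central measure $\mu_c$ with the Choquet representing measure $\mu_\tau$: one must match two a priori different notions of ``decomposition into factors'', namely the von Neumann central decomposition over $\cZ(M_\tau)$ and the barycentric decomposition over $\partial S$. The bridge is the equivalence, for traces, between factoriality of a component and extremality in the trace simplex, together with uniqueness of the maximal representing measure on a metrizable simplex; care is needed to check that the component states genuinely lie in the face $S$ (this uses that $S$ is a face) and are extreme for $\mu_c$-almost every point. An alternative, more hands-on route avoids orthogonal-measure machinery altogether: identify $\cZ(M_\tau)\cong L^\infty(X,\nu)$ with $\nu$ induced by $\tau|_{\cZ(M_\tau)}$, push $\nu$ forward along $\omega\mapsto\omega\circ\ctr_\tau\circ\pi_\tau$ to a representing measure on $\partial S$, and again invoke Choquet uniqueness to identify it with $\mu_\tau$ and thereby $(X,\nu)$ with $(\partial S,\mu_\tau)$, recovering $\theta_\tau$ and the formula.
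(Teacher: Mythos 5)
Your proposal is correct in substance but takes a genuinely different route from the paper's. You build $\theta_\tau$ abstractly from Tomita's correspondence between abelian subalgebras of $\pi_\tau(A)'$ and orthogonal measures, and then identify the central measure $\mu_c$ with the boundary measure $\mu_\tau$; in effect you are importing the classical theorem that the central decomposition of a trace coincides with its extremal decomposition in the trace simplex (Sakai, Bratteli--Robinson). The paper goes in the opposite direction and never introduces $\mu_c$: for $f\in L^\infty(\partial S,\mu_\tau)$ the functional $a\mapsto\int f\hat{a}\,d\mu_\tau$ is tracial with modulus dominated by $\|f\|_\infty\tau$, so Sakai's Radon--Nikodym theorem yields a unique central element $\theta_\tau(f)$ satisfying the displayed identity; surjectivity follows by sending $z\in\cZ(\pi_\tau(A)'')_+$ to $d\mu_{z\tau}/d\mu_\tau$ through the affine order isomorphism $\sigma\leftrightarrow\mu_\sigma$ recorded just before the lemma, and multiplicativity falls out of the identity $\mu_{\theta_\tau(f)\tau}=f\mu_\tau$. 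This is shorter and, more importantly, sidesteps the one genuinely delicate point in your outline: the lemma is stated without assuming $A$ separable (only $S$ is metrizable), and in that generality the central measure is a priori only \emph{pseudo}-carried by the factor states, and the pointwise disintegration into tracial factor components that you invoke is not literally available. Your step that $\mu_c$ is carried by $\partial S$ therefore needs an extra argument --- for instance, first show $\mu_c(S)=1$ by approximating $\mu_c$ weak$^*$ by the discrete measures attached to finite central partitions of unity, whose atoms $\tau(z_i\,\cdot\,)/\tau(z_i)$ lie in $S$ because $S$ is a face, and then establish maximality of $\mu_c$ inside the metrizable simplex $S$ --- which is precisely the work the Radon--Nikodym argument renders unnecessary. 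The remaining ingredients you list (closed faces are measure-extremal, extremality in $S$ equals factoriality for traces, uniqueness of maximal representing measures on a metrizable simplex) are all correct.
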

\begin{proof}
Let $f\in L^\infty(\partial S,\mu_\tau)$ be given.
The right hand side of the claimed equality defines a tracial
linear functional on $A$ whose modulus is dominated by a scalar multiple of $\tau$.
Hence, by Sakai's Radon--Nikodym theorem, there is a unique
$\theta_\tau(f) \in \cZ(\pi_\tau(N))$ that satisfies the claimed equality.
This defines a unital normal positive map $\theta_\tau$ from $L^\infty(\partial S,\mu_\tau)$
into $\cZ(\pi_\tau(A)'')$.
Next, let $z\in\cZ(\pi_\tau(N))_+$ be given. Then, the tracial linear functional $z\tau$ on $A$
defined by $(z\tau)(a)=\tau(az)$ is dominated by $\|z\|\tau$.
Hence one has $\mu_{z\tau}\le\|z\|\mu_\tau$ and $z=\theta_\tau(d\mu_{z\tau}/d\mu_\tau)$ with
$d\mu_{z\tau}/d\mu_\tau\in L^\infty(\partial S,\mu_\tau)$.
This proves $\theta_\tau$ is a positive linear isomorphism such that
$\mu_{\theta_\tau(f)\tau}=f\mu_\tau$. Therefore, one has
$\mu_{\theta_\tau(fg)\tau}=fg\mu_\tau=f\mu_{\theta_\tau(g)\tau}=\mu_{\theta_\tau(f)\theta_\tau(g)\tau}$,
which proves $\theta_\tau(fg)=\theta_\tau(f)\theta_\tau(g)$.
\end{proof}

\begin{proof}[Proof of Theorem~\ref{thm:ast}]
We first find the $*$-homo\-mor\-phism $\theta\colon B(\partial S)\to\cZ(N)$ that satisfies
\[
\tau(\theta(f)a) = \int f(\lambda)\lambda(a)\,d\mu_\tau(\lambda)
\]
for every $a\in A$ and $\tau\in S$, or equivalently, $\pi_\tau(\theta(f))=\theta_\tau(f)$ in $\pi_\tau(A)''$.
For this, it suffices to show that the maps $\theta_\tau|_{B(\partial S)}$,
given in Lemma~\ref{lem:bmac}, are compatible over $\tau\in S$.
We recall that associated with the representation $\pi_\tau$, there is
a unique central projection $p_\tau\in\cZ(N)$ such that $(1-p_\tau) N =\ker\pi_\tau$.
Since $p_\tau\vee p_\sigma = p_{(\tau+\sigma)/2}$,
the family $\{ p_\tau : \tau\in S\}$ is upward directed and $\sup_\tau p_\tau = 1$.
We will show that if $\tau$ and $\sigma$ are such that $\tau\le C\sigma$ for some $C>1$,
then $\theta_\tau(f)=p_\tau \theta_\sigma(f)$ in $\cZ(N)$.
We note that $p_\tau$ is the support projection of $d\tau/d\sigma\in\cZ(N)$.
For every $f\in B(\partial S)$, one has
\begin{align*}
\sigma(\theta_\sigma(\frac{d\mu_\tau}{d\mu_\sigma}f)a)
 &= \int (\frac{d\mu_\tau}{d\mu_\sigma}f)(\lambda)\lambda(a)\,d\mu_\sigma(\lambda)\\
 &= \int f(\lambda)\lambda(a)\,d\mu_\tau(\lambda)\\
 &= \tau(\theta_\tau(f)a)\\
 &= \sigma(\frac{d\tau}{d\sigma}\theta_\tau(f)a).
\end{align*}
This implies $\theta_\sigma(\frac{d\mu_\tau}{d\mu_\sigma}f)=\frac{d\tau}{d\sigma}\theta_\tau(f)$ for every $f$.
In particular, $\theta_\sigma(\frac{d\mu_\tau}{d\mu_\sigma})=\frac{d\tau}{d\sigma}$ and
$p_\tau \theta_\sigma(f)=\theta_\tau(f)$ in $\cZ(N)$.
Therefore, we may glue $\{\theta_\tau\}_{\tau\in S}$ together and obtain a globally defined $*$-homo\-mor\-phism
$\theta\colon B(\partial S)\to\cZ(N)$.
Since $\tau(\theta(\hat{a}))=\int\hat{a}(\lambda)\,d\mu_\tau(\lambda)=\tau(a)$
for every $\tau\in S$, one has $\theta(\hat{a})=\ctr(a)$ for every $a\in A$.
This proves the first part of the theorem.

For the second part, it suffices to prove
\[
\overline{A}^{\st}\supset\{ x\in N : \ctr(xA)\subset \cAff(\partial S),\ \ctr(x^*x)\in\cAff(\partial S)\},
\]
as the converse inclusion is trivial. Take $x$ from the set in the right hand side.
We will prove a stronger assertion that if a net $(b_j)_j$ in $A$ converges
to $x$ ultra\-strongly in $N$, then $x$ is contained in the strict closure of
the convex hull of $\{ b_j : j\}$.
We note that $\Aff(S)\ni f\mapsto f|_{\partial S}\in\cAff(\partial S)$ is an affine order
isomorphism and that every positive norm-one linear functional $\mu$ on $\Aff(S)$ is given
by the evaluation at a point $\tau_\mu\in S$.
(Indeed by the Hahn--Banach theorem, we may regard $\mu$ as a state on $C(S)$,
which is a probability measure on $S$ by the Riesz--Markov theorem. The point
$\tau_\mu=\int \lambda\,d\mu(\lambda)$ satisfies $f(\tau_\mu)=\mu(f)$ for $f\in\Aff(S)$.)
Thus, one has $\ctr((b_j-x)^*(b_j-x))\to0$ weakly in $\cAff(\partial S)$.
Therefore, by the Hahn--Banach theorem, for every $\e>0$ there is a finite
sequence $\alpha_j\geq0$, $\sum\alpha_j=1$ such that $\|\sum_j\alpha_j\ctr((b_j-x)^*(b_j-x))\|<\e$.
By reindexing, we assume $j=1,\ldots,k$.
Let $b=\sum\alpha_jb_j$. We note that
\[
b = \begin{bmatrix}\alpha_1^{1/2} & \cdots &  \alpha_m^{1/2} \end{bmatrix}
\begin{bmatrix}\alpha_1^{1/2}b_1 \\ \vdots \\  \alpha_m^{1/2}b_m \end{bmatrix}
 =: rc.
\]
Hence, $b^*b=c^*r^*rc\le \|r\|^2c^*c=\sum\alpha_jb_j^*b_j$.
It follows that
\begin{align*}
\ctr((b-x)^*(b-x)) &= \ctr(b^*b - b^*x - x^*b + x^*x)\\
 &\le \ctr(\sum\alpha_jb_j^*b_j - \sum\alpha_jb_j^*x - x^*\sum\alpha_jb_j + x^*x)\\
 &= \ctr(\sum\alpha_j(b_j-x)^*(b_j-x))\\
 &< \e.
\end{align*}
This proves the claimed inclusion.
The last assertion will be proved in more general setting as Theorem \ref{thm:cwf}.
\end{proof}

\section{Continuous $\mathrm{W}^*$-bundles}\label{sec:ax}

Let $K$ be a metrizable compact Hausdorff topological space.
We call $M$ a (tracial) \emph{continuous $\mathrm{W}^*$-bundle over $K$}
if the following axiom hold:
\begin{enumerate}[$(1)$]
\item\label{ax:1}
There is a unital positive faithful tracial map $E\colon M\to C(K)$.
\item\label{ax:2}
The closed unit ball of $M$ is complete with respect to
the uniform $2$-norm
\[
\|x\|_{2,\ru}=\|E(x^*x)^{1/2}\|.
\]
\item \label{ax:3}
$C(K)$ is contained in the center of $M$ and $E$ is a conditional expectation.
\end{enumerate}
In case $M$ satisfies only conditions $(\ref{ax:1})$ and $(\ref{ax:2})$, we say
it is a continuous quasi-$\mathrm{W}^*$-bundle.
If we denote by $\pi_E$ the GNS representation of $M$ on the Hilbert $C(K)$-module $L^2(M,E)$,
condition $(\ref{ax:2})$ is equivalent to that $\pi_E(M)$ is strictly closed in $\IB(L^2(M,E))$.
For each point $\lambda\in K$, we denote by $\pi_\lambda$ the GNS representation
for the tracial state $\tau_\lambda:=\ev_\lambda\circ E$,
and also $\|x\|_{2,\lambda}=\tau_\lambda(x^*x)^{1/2}$.
We call each $\pi_\lambda(M)$ a \emph{fiber} of $M$.
A caveat is in order: the system $(M,K,\pi_\lambda(M))$ need not be a continuous $\mathrm{C}^*$-bundle
because $\ker\pi_\lambda$ may not coincide with $C_0(K\setminus\{\lambda\})M$---rather it coincides
with the strict closure of that.
In particular, for $x\in M$, the map $\lambda\mapsto\|\pi_\lambda(x)\|$
need not be upper semi-continuous (but it is lower semi-continuous).
The strict completion $\overline{A}^{\st}$ studied in Section \ref{sec:u2}
is a continuous quasi-$\mathrm{W}^*$-bundle over $S$, and
by Theorem \ref{thm:ast}, it is a continuous $\mathrm{W}^*$-bundle over $\partial S$
if $\partial S$ is closed in $S$.
Conversely, if each fiber $\pi_\lambda(M)$ is a factor, then $K$ can be viewed as
a closed subset of the extreme boundary of $T(M)$ and hence
the closed convex hull $S$ of $K$ is a metrizable closed face of $T(M)$
such that $\partial S=K$.

\begin{thm}\label{thm:cwf}
Let $M$ be a continuous $\mathrm{W}^*$-bundle over $K$.
Then, $\pi_\lambda(M)=\pi_\lambda(M)''$ for every $\lambda\in K$.
Moreover, if a bounded function $f\colon K\ni\lambda\mapsto f(\lambda)\in\pi_\lambda(M)$
is continuous in the following sense:
for every $\lambda_0\in K$ and $\e>0$, there are a neighborhood $O$ of $\lambda_0$
and $c\in M$ such that
\[
\sup_{\lambda\in O}\| \pi_\lambda(c)-f(\lambda) \|_{2,\lambda}<\e;
\]
then there is $a\in M$ such that $\pi_\lambda(a)=f(\lambda)$.
\end{thm}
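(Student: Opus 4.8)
\section*{Proof proposal for Theorem~\ref{thm:cwf}}

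The plan is to isolate two structural facts about $M$ and then glue local data using the completeness axiom $(\ref{ax:2})$. The first fact is that the operator norm on $M$ is recovered from the fibers, $\|a\|=\sup_{\lambda\in K}\|\pi_\lambda(a)\|$: the inequality ``$\ge$'' is clear, and if some $b=a^*a$ satisfied $\sup_\lambda\|\pi_\lambda(b)\|=s<\|b\|$, then choosing a continuous $g\ge0$ with $g|_{[0,s]}=0$ and $g(\|b\|)=1$ gives $g(b)\ne0$ while $\pi_\lambda(g(b))=g(\pi_\lambda(b))=0$ for all $\lambda$; since $\bigcap_\lambda\ker\pi_\lambda=\{y:E(y^*y)=0\}=\{0\}$ by faithfulness of $E$, this is absurd. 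The second fact is that $\lambda\mapsto\|\pi_\lambda(y)\|_{2,\lambda}=\ev_\lambda(E(y^*y))^{1/2}$ is continuous because $E(y^*y)\in C(K)$, and that $\|y\|_{2,\ru}=\sup_\lambda\|\pi_\lambda(y)\|_{2,\lambda}$. Throughout I use that the GNS vector $\Omega_\lambda$ is separating for $\pi_\lambda(M)''$, so that an equality of $L^2$-vectors $\pi_\lambda(a)\Omega_\lambda=f(\lambda)\Omega_\lambda$ upgrades to $\pi_\lambda(a)=f(\lambda)$ whenever both sides lie in $\pi_\lambda(M)''$.

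For the first assertion, fix $\lambda_0$ and $x\in\pi_{\lambda_0}(M)''$ with $\|x\|\le1$. By Kaplansky's density theorem I choose $c_n\in M$, $\|c_n\|\le1$, with $\|\pi_{\lambda_0}(c_n)-x\|_{2,\lambda_0}\to0$ so fast that $\|\pi_{\lambda_0}(c_{n+1}-c_n)\|_{2,\lambda_0}<2^{-n}$. By continuity of the $2$-norm function there are shrinking neighborhoods $U_n\ni\lambda_0$ on which $\|\pi_\lambda(c_{n+1}-c_n)\|_{2,\lambda}<2^{-n}$, together with nested cutoffs $g_n\in C(K)$, $0\le g_{n+1}\le g_n\le1$, satisfying $g_n(\lambda_0)=1$ and $\operatorname{supp}g_n\subset U_n$. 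Setting $a_N=c_1+\sum_{n=1}^{N-1}g_n(c_{n+1}-c_n)$ (legitimate since $C(K)$ is central), an Abel summation rewrites $\pi_\lambda(a_N)$ as a \emph{convex} combination of $\pi_\lambda(c_1),\dots,\pi_\lambda(c_N)$, whence $\|a_N\|\le1$ by the first fact; meanwhile $a_{N+1}-a_N=g_N(c_{N+1}-c_N)$ has $\|\cdot\|_{2,\ru}<2^{-N}$, so $(a_N)$ is Cauchy for the uniform $2$-norm inside the unit ball. By $(\ref{ax:2})$ it converges to some $a\in M$, and since $\pi_{\lambda_0}(a_N)=\pi_{\lambda_0}(c_N)$ telescopes one gets $\pi_{\lambda_0}(a)=x$. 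Hence $\pi_{\lambda_0}(M)=\pi_{\lambda_0}(M)''$.

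For the ``moreover'' part, let $B=\sup_\lambda\|f(\lambda)\|$ and assume $B>0$. The key device is a truncation: for $y\in M$ and $\beta>0$ put $y_\beta=y\,s_\beta(y^*y)$ with $s_\beta(t)=\min(1,\beta t^{-1/2})$, so that $\|y_\beta\|\le\beta$ and, in the fiber, $\|\pi_\lambda(y)-\pi_\lambda(y_\beta)\|_{2,\lambda}^2\le\tau_\lambda(\pi_\lambda(y)^*\pi_\lambda(y)\,p_\lambda)$, where $p_\lambda\in\pi_\lambda(M)''$ is the spectral projection of $\pi_\lambda(y)^*\pi_\lambda(y)$ above $\beta^2$. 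A short estimate in $\pi_\lambda(M)''$ shows that if $\|\pi_\lambda(y)-f(\lambda)\|_{2,\lambda}<\e$ and $\|f(\lambda)\|\le B<\beta$, then $(\beta-B)\tau_\lambda(p_\lambda)^{1/2}\le\e$, giving $\|\pi_\lambda(y)-\pi_\lambda(y_\beta)\|_{2,\lambda}\le\e\beta/(\beta-B)\le2\e$ for $\beta=2B$. Now for each $\e>0$ I cover $K$ by finitely many neighborhoods $O_i$ carrying local approximants $c_i\in M$ with $\sup_{O_i}\|\pi_\lambda(c_i)-f(\lambda)\|_{2,\lambda}<\e$ (the continuity hypothesis), truncate them at $\beta=2B$, and glue by a subordinate partition of unity $h_i$ into $a_\e=\sum_i h_i(c_i)_{2B}$. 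Truncation forces $\|a_\e\|\le2B$, while for each $\lambda$ the identity $\pi_\lambda(a_\e)-f(\lambda)=\sum_i h_i(\lambda)\bigl(\pi_\lambda((c_i)_{2B})-f(\lambda)\bigr)$ and convexity of $\|\cdot\|_{2,\lambda}$ give $\sup_\lambda\|\pi_\lambda(a_\e)-f(\lambda)\|_{2,\lambda}\le3\e$. Taking $\e=3^{-n}$, the resulting $b_n$ lie in the ball of radius $2B$ and are Cauchy for $\|\cdot\|_{2,\ru}$; by $(\ref{ax:2})$ they converge to some $a\in M$ with $\|\pi_\lambda(a)-f(\lambda)\|_{2,\lambda}=0$, i.e.\ $\pi_\lambda(a)=f(\lambda)$, for every $\lambda$.

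The main obstacle is exactly the operator-norm bookkeeping: a bare partition-of-unity gluing of $2$-norm-close local sections produces an element that is $2$-norm-close to $f$ but whose \emph{operator} norm is uncontrolled, so completeness of the \emph{bounded} ball cannot be invoked. The truncation lemma, via the sharp tail bound $(\beta-B)\tau_\lambda(p_\lambda)^{1/2}\le\e$ carried out in each finite von Neumann algebra $\pi_\lambda(M)''$, is what simultaneously pins the norm at $2B$ and keeps the $2$-norm error at $O(\e)$; the remaining steps are routine gluing and an appeal to axiom $(\ref{ax:2})$.
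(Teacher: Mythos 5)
Your proof is correct, and while it shares the paper's overall skeleton (local sections, central cutoff functions from $C(K)$, and completeness of the norm-bounded ball in $\|\cdot\|_{2,\ru}$ via axiom $(\ref{ax:2})$), both halves take genuinely different routes. For $\pi_\lambda(M)=\pi_\lambda(M)''$, the paper first invokes Pedersen's up--down theorem to reduce to monotone sequential limits, lifts an increasing sequence, and controls the operator norm of the glued series by positivity ($a_0+\sum_{k\le n}b_kf_k\le a_n\le\|x\|+1$); you avoid the up--down theorem altogether by combining Kaplansky's density theorem with an Abel-summation trick that exhibits each partial sum as a fiberwise convex combination of the unit-ball approximants, the norm bound then coming from the formula $\|a\|=\sup_\lambda\|\pi_\lambda(a)\|$, which you rightly isolate and prove (the paper uses it only implicitly, e.g.\ when asserting $\|a_n\|\le\|f\|_\infty$ in the second half). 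Your version is more self-contained and handles arbitrary, not just monotone, approximation. For the ``moreover'' part, the divergence is in how the operator norm of the glued element is tamed: the paper discards the approximant $c$ supplied by the hypothesis and instead takes a norm-controlled lift $b_\lambda\in M$ of $f(\lambda)$ itself (possible by the first half, with $\|b_\lambda\|\le\|f(\lambda)\|$), checking via the triangle inequality that $b_\lambda$ is still a good local $2$-norm approximant near $\lambda$; you keep the given approximants and truncate them by continuous functional calculus, with a quantitative spectral-projection estimate $(\beta-B)\tau_\lambda(p_\lambda)^{1/2}\le\e$ showing the truncation costs only $O(\e)$ in each $\|\cdot\|_{2,\lambda}$. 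The paper's device is shorter; your truncation lemma is a reusable tool that does not require re-lifting through the fibers and makes explicit the operator-norm bookkeeping that the paper passes over quickly. Both arguments are complete.
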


\begin{proof}
Let $\lambda\in K$ be given. By Pedersen's up-down theorem (Theorem 2.4.4 in \cite{pedersen}),
it suffices to show that
$\pi_\lambda(M)$ is closed in $\pi_\lambda(M)''$ under monotone sequential limits.
Let $(x_n)_{n=0}^\infty$ be an increasing sequence in $\pi_\lambda(M)_+$
such that $x_n \nearrow x$ in $\pi_\lambda(M)''$.
We may assume that $\|x_n-x\|_{2,\lambda}<2^{-n}$.
We lift $(x_n)_{n=0}^\infty$ to an increasing sequence $(a_n)_{n=0}^\infty$ in $M$
such that $a_n\le \|x\|+1$.
Let $b_n=a_n-a_{n-1}$ for $n\geq1$.
Since $\tau_\lambda(b_n^*b_n)<4^{-n+2}$,
there is $f_n\in C(K)_+$ such that $0\le f_n\le 1$, $f_n(\lambda)=1$, and $\| E(b_n^*b_n)f_n^2\|\le 4^{-n+2}$.
It follows that the series $a_0+\sum_{n=1}^\infty b_nf_n$ is convergent in the uniform $2$-norm.
Moreover, since $a_0+\sum_{k=1}^n b_kf_k\le a_0+\sum_{k=1}^n b_k=a_n\le \|x\|+1$,
the series is norm bounded. Therefore, the series converges in $M$, by the completeness of the closed
unit ball of $M$. The limit point $a$ satisfies $\pi_\lambda(a) = x$.

We prove the second half.
Let us fix $n$ for a while.
For each $\lambda$, there is $b_\lambda\in M$ such that
$\|b_\lambda\|\le\|f(\lambda)\|$ and $\pi_\lambda(b_\lambda)=f(\lambda)$.
By continuity, there is a neighborhood
$O_\lambda$ of $\lambda$ such that
$\|\pi_{\tau}(b_\lambda) - f(\tau)\|_{2,\tau}<n^{-1}$
for $\tau\in O_\lambda$. Since $K$ is compact, it is covered by a finite family
$\{ O_{\lambda_i}\}$. Let $g_i\in C(K)\subset\cZ(M)$ be a partition
of unity subordinated by it. Then, $a_n:=\sum_i g_i b_{\lambda_i}\in M$
satisfies $\|a_n\|\le\|f\|_\infty$ and $\sup_\tau\|\pi_{\tau}(a_n) - f(\tau)\|_{2,\tau}<n^{-1}$.
It follows that $(a_n)$ is a norm bounded and Cauchy in the uniform $2$-norm.
Hence it converges to $a\in M$ such that $\pi_\lambda(a)=f(\lambda)$ for every $\lambda\in K$.
\end{proof}

The following is a $\mathrm{W}^*$-analogue of the result for $\mathrm{C}^*$-alge\-bras in \cite{hrw},
and is essentially the same as Proposition 7.7 in \cite{kr}.

\begin{cor}\label{cor:mcduff}
Let $M$ be a continuous $\mathrm{W}^*$-bundle over $K$.
Assume that each fiber $\pi_\lambda(M)$ has the McDuff property and
that $K$ has finite covering dimension.
Then, for every $k$, there is an approximately central approximately multiplicative
embedding of $\IM_k$ into $M$, namely
a net of unital completely positive maps $\p_n\colon\IM_k\to M$
such that $\limsup_n\|\p_n(xy) - \p_n(x)\p_n(y)\|_{2,\ru}=0$
and $\limsup_n\|[\p_n(x),a]\|_{2,\ru}=0$ for every $x,y\in\IM_k$ and $a\in M$.
\end{cor}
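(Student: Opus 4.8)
The main tool is Theorem~\ref{thm:cwf}, which lets us realize sufficiently continuous fiberwise data by genuine elements of $M$ and, in its first half, identifies each fiber $\pi_\lambda(M)$ with its own bicommutant. The plan is to build the net by a local-to-global argument: it suffices to produce, for each finite subset $G\subset M$ and each $\e>0$, a single unital completely positive map $\p\colon\IM_k\to M$ with $\|\p(xy)-\p(x)\p(y)\|_{2,\ru}<\e$ and $\|[\p(x),a]\|_{2,\ru}<\e$ for all $x,y$ in the unit ball of $\IM_k$ and all $a\in G$, since indexing over $(G,\e)$ then yields the desired net.

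First I would produce local embeddings. Fix $\lambda_0\in K$. Because $\pi_{\lambda_0}(M)$ has the McDuff property, there is a unital $*$-homo\-mor\-phism $\iota\colon\IM_k\to\pi_{\lambda_0}(M)$ whose range is approximately central, i.e.\ $\|[\iota(x),\pi_{\lambda_0}(a)]\|_{2,\lambda_0}<\e$ for $a\in G$ and $x$ in the unit ball. By the first half of Theorem~\ref{thm:cwf} the fiber $\pi_{\lambda_0}(M)$ equals $\pi_{\lambda_0}(M)''$, so the images $\iota(e_{st})$ of a system of matrix units lift exactly to elements of $M$; these define a map $\rho^{(\lambda_0)}\colon\IM_k\to M$ with $\pi_{\lambda_0}\circ\rho^{(\lambda_0)}=\iota$. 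Since multiplication and the fiber $2$-norms vary continuously, the approximate multiplicativity and approximate $G$-centrality that hold exactly at $\lambda_0$ persist, in the $2$-norm $\|\cdot\|_{2,\lambda}$, on a whole neighborhood $O_{\lambda_0}$.

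By compactness I would extract a finite subcover $\{O_{\lambda_i}\}$, and here the hypothesis $\dim K\le d<\infty$ enters: refine it to an open cover admitting a partition of unity $\{g_i\}\subset C(K)\subset\cZ(M)$ together with a coloring into $d+1$ classes such that, within each color $c$, the supports of the $g_i$ are pairwise disjoint. For each color form $\psi_c=\sum_{\mathbf{c}(i)=c} g_i\,\rho^{(\lambda_i)}(\cdot)$. Within a single color the disjointness of supports means no cross terms appear, so $\psi_c$ is (approximately, in the uniform $2$-norm) a multiple of a $*$-homo\-mor\-phism with central weight $h_c:=\psi_c(1)=\sum_{\mathbf{c}(i)=c}g_i$, it is completely positive and approximately $G$-central, and $\sum_{c=0}^{d}h_c=1$.

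The main obstacle is to combine the $d+1$ colors into a single approximately multiplicative map. The naive sum $\sum_c\psi_c$ fails because the cross terms $h_ch_{c'}\rho^{(\lambda_i)}(x)\rho^{(\lambda_j)}(y)$ with $c\neq c'$ have no reason to agree with $\sum_c h_c\rho^{(\lambda_i)}(xy)$. To repair this I would exploit the McDuff property once more: in each fiber any two approximately central unital embeddings of $\IM_k$ are approximately unitarily equivalent by an approximately central unitary, so the several local embeddings occurring across the colors can all be aligned to one reference embedding near each point. Realizing the aligning unitaries in $M$ by a further application of Theorem~\ref{thm:cwf}, and conjugating, one arranges that all color pieces use (approximately) the same $*$-homo\-mor\-phism $\rho$; then $\sum_c h_c\rho(\cdot)$ collapses, since $\sum_c h_c=1$, to an approximately multiplicative, approximately $G$-central, and (after a routine normalization) unital completely positive map. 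Carrying out this alignment uniformly and continuously over $K$---so that the unitaries genuinely lie in $M$ and respect the colored partition---is the delicate point; the finite covering dimension is exactly what keeps the number of pieces to be aligned bounded, and the disjoint-support structure is what prevents the alignments within a given color from interfering.
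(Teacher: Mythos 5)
Your local analysis (exact embeddings into $\pi_{\lambda_0}(M)=\pi_{\lambda_0}(M)''$ via the McDuff property, lifting to $M$, persistence of approximate multiplicativity and approximate $G$-centrality in $\|\cdot\|_{2,\lambda}$ on a neighborhood, a finite subcover, and a $(d+1)$-colored partition of unity with disjoint supports within each color) matches the setup of the paper's argument, which in the zero-dimensional case is exactly your construction with a clopen partition and Theorem~\ref{thm:cwf}. The gap is in the step you yourself flag as delicate: combining the $d+1$ colors by aligning all local embeddings to a common reference via approximately central unitaries chosen ``uniformly and continuously over $K$.'' This is not a routine repair; it is the essential difficulty. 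The aligning unitary between two unital copies of $\IM_k$ in a fiber is only determined modulo the unitary group of the relative commutant, so a continuous global choice over the overlaps is a nonabelian cocycle problem, and compatibility on multiple overlaps is not addressed at all. Worse, there is no globally defined reference embedding to align to --- that is precisely the object being constructed --- and a direct trivialization of the ``bundle of embeddings'' is essentially the content of Theorem~\ref{thm:crf} and the final corollary, whose proofs rest on Corollary~\ref{cor:mcduff}; your route therefore risks circularity.

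The paper avoids this entirely by the standard order-zero device, to which it defers (Section 7 of \cite{kr}; see also \cite{sato,tww}): within each color $c$ the map $\psi_c=\sum_{\mathbf{c}(i)=c} g_i\,\rho^{(\lambda_i)}$ is a completely positive contractive order-zero map (central coefficients with disjoint supports preserve orthogonality), and one obtains $d+1$ order-zero maps with approximately commuting, approximately central ranges and $\sum_c\psi_c(1)=1$. The algebraic machinery for order-zero maps then converts such a family into a single approximately central, approximately multiplicative unital completely positive map $\IM_k\to M$ \emph{without} ever choosing aligning unitaries, which is exactly what sidesteps the obstruction your proposal runs into. To complete your proof you would need either to carry out that order-zero argument or to supply a genuine solution to the continuous-alignment problem, including the behavior on multiple overlaps; as written, the final step does not go through.
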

\begin{proof}
The proof is particularly easy when $K$ is zero-dimensional:
Since $\pi_\lambda(M)$ is McDuff, there is an approximately central embedding
of $\IM_k$ into $\pi_\lambda(M)$.
We lift it to a unital completely positive map $\psi_\lambda\colon\IM_k\to M$.
It is almost multiplicative on a neighborhood $O_\lambda$ of $\lambda$.
Since $K$ is compact and zero-dimensional, there is a partition of $K$ into
finitely many clopen subsets $\{V_i\}$ such that $V_i\subset O_{\lambda_i}$.
By Theorem \ref{thm:cwf}, one can define $\p\colon\IM_k\to M$ by the relation
$\pi_\lambda\circ\p=\pi_\lambda\circ\psi_{\lambda_i}$ for $\lambda\in V_i$.
The case $0<\dim K<+\infty$ is more complicated but follows from
a standard argument involving order-zero maps.
See Section 7 in \cite{kr} (or \cite{sato,tww}) for the detail.
\end{proof}

Every separable hyperfinite von Neumann algebra with a faithful normal tracial state
has a trace preserving embedding into the separable hyperfinite $\mathrm{II}_1$ factor $\cR$.
We consider coordinatization of such embeddings for strictly separable fiberwise hyperfinite
continuous quasi-$\mathrm{W}^*$-bundle.
We define the $\mathrm{C}^*$-alge\-bra $C_\sigma(K,\cR)$ to be the subalgebra of $\ell_\infty(K,\cR)$ which
consists of those norm-bounded functions $f\colon K\to\cR$
that are continuous from $K$ into $L^2(\cR,\tau_{\cR})$.

\begin{thm}\label{thm:emb}
Let $M$ be a strictly separable continuous quasi-$\mathrm{W}^*$-bundle over $K$
such that $\pi_\lambda(M)''$ is hyperfinite for every $\lambda\in K$.
Then, there are an embedding $\theta\colon M\hookrightarrow C_\sigma(K,\cR)$
and embeddings $\iota_\lambda\colon\pi_\lambda(M)\hookrightarrow\cR$
such that $\ev_\lambda\circ\theta = \iota_\lambda\circ\pi_\lambda$.
If $M$ is moreover a continuous $\mathrm{W}^*$-bundle, then one has
\[
\theta(M) = \{ f\in C_\sigma(K,\cR) : f(\lambda)\in(\iota_\lambda\circ\pi_\lambda)(M)''\}.
\]
\end{thm}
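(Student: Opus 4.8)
The plan is to reduce everything to the construction of a single trace-preserving $*$-homomorphism $\theta\colon M\to C_\sigma(K,\cR)$; the fibrewise embeddings $\iota_\lambda$ and the intertwining relation will then come for free. Indeed, observe first that $C_\sigma(K,\cR)$ is itself a continuous $\mathrm{W}^*$-bundle over $K$ with every fibre equal to $\cR$, the associated conditional expectation being $f\mapsto[\lambda\mapsto\tau_\cR(f(\lambda))]\in C(K)$. If $\theta$ is a $*$-homomorphism with $\tau_\cR(\theta(a)(\lambda))=\tau_\lambda(a)$ for all $a$ and $\lambda$ (\emph{trace preservation}), then $\|\theta(a)\|_{2,\lambda}=\|a\|_{2,\lambda}$, so $\theta(a)(\lambda)=0$ exactly when $a\in\ker\pi_\lambda$; hence $\ev_\lambda\circ\theta$ factors through $\pi_\lambda$ as a trace-preserving normal embedding $\iota_\lambda\colon\pi_\lambda(M)\hookrightarrow\cR$ with $\ev_\lambda\circ\theta=\iota_\lambda\circ\pi_\lambda$, and $\iota_\lambda(\pi_\lambda(M))=\ev_\lambda(\theta(M))$ is a von Neumann subalgebra of $\cR$. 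Trace preservation together with faithfulness of $E$ forces $\theta$ to be injective, hence isometric.

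To build $\theta$, I fix a sequence $(a_m)_m$ in the unit ball of $M$ that is dense for the uniform $2$-norm and construct unital trace-preserving completely positive maps $\Phi_n\colon M\to C_\sigma(K,\cR)$ which are $*$-preserving and multiplicative up to $2^{-n}$ in $\|\cdot\|_{2,\ru}$ on $\{a_1,\dots,a_n\}$ and which satisfy $\|\Phi_{n+1}(a_i)-\Phi_n(a_i)\|_{2,\ru}<2^{-n}$ for $i\le n$. Because each $\Phi_n$ is completely positive with $\|\Phi_n(a)\|\le\|a\|$, the sequence $(\Phi_n(a_m))_n$ is then Cauchy in $\|\cdot\|_{2,\ru}$, and a routine estimate shows that its limit $\theta$ is genuinely multiplicative and $*$-preserving on the dense sequence; thus $\theta$ extends to the desired trace-preserving $*$-homomorphism. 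Each $\Phi_n$ is assembled locally: for a fixed $\lambda_0$ the fibre $\pi_{\lambda_0}(M)''$ is hyperfinite, so it embeds trace-preservingly into $\cR$, and precomposing with $\pi_{\lambda_0}$ yields a $*$-homomorphism $M\to\cR$ whose trace agrees with $\tau_\lambda$ for $\lambda$ near $\lambda_0$ (by continuity of $\lambda\mapsto\tau_\lambda(a^*a)=E(a^*a)(\lambda)$); compactness of $K$ then produces a finite cover and a subordinate partition of unity in $C(K)\subset\cZ(M)$.

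The main obstacle is precisely the globalisation step: a naive partition-of-unity average of the local embeddings is completely positive and (approximately) trace preserving but badly non-multiplicative, and on the overlaps of the cover the local embeddings must be made to coincide to within the stage-$n$ tolerance. This is the point at which Corollary~\ref{cor:mcduff} invoked finite covering dimension; here it can be avoided because no approximate centrality is required. The enabling inputs are that $\cR\cong\cR\mathbin{\bar\otimes}\cR$ (so each stage can be placed in a fresh tensor factor, leaving earlier stages undisturbed and yielding the Cauchy condition), that any two trace-preserving embeddings of a separable hyperfinite algebra into $\cR$ are approximately unitarily equivalent (so the local models on overlaps differ by an inner perturbation once their traces have been matched), and that the unitary group $\cU(\cR)$ is path-connected---indeed contractible---in the strong topology, so the resulting unitary matching data can be trivialised continuously over $K$ regardless of its dimension. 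I expect the careful bookkeeping of these inner conjugations across the finite cover to be the technical heart of the argument.

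Finally, for the description of the image when $M$ is a genuine continuous $\mathrm{W}^*$-bundle, I transport the bundle structure of $M$ through the isometry $\theta$. Since $\theta$ is trace preserving and the unit ball of $M$ is $\|\cdot\|_{2,\ru}$-complete, $\theta(M)$ is strictly closed and is a continuous $\mathrm{W}^*$-bundle over $K$ whose fibre at $\lambda$ is $\ev_\lambda(\theta(M))=(\iota_\lambda\circ\pi_\lambda)(M)''$. The inclusion $\subset$ in the asserted identity is immediate. For $\supset$, take $f\in C_\sigma(K,\cR)$ with $f(\lambda)\in(\iota_\lambda\circ\pi_\lambda)(M)''$ for every $\lambda$. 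As $\ev_\lambda(\theta(M))$ is $\|\cdot\|_{2,\lambda}$-dense in its own fibre and both $f$ and the elements $\theta(a)$ are strongly continuous, near each $\lambda_0$ some $\theta(a)$ approximates $f$ in $\|\cdot\|_{2,\lambda}$; thus $f$, viewed as a fibrewise function into the fibres of $\theta(M)$, satisfies the continuity hypothesis of the second half of Theorem~\ref{thm:cwf}. That theorem, applied to the continuous $\mathrm{W}^*$-bundle $\theta(M)$, then produces $g\in\theta(M)$ with $\ev_\lambda(g)=f(\lambda)$ for all $\lambda$, i.e.\ $g=f$, whence $f\in\theta(M)$.
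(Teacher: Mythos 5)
Your overall architecture---reduce to one trace-preserving $*$-homomorphism $\theta$ so that the $\iota_\lambda$ and the intertwining come for free, build $\theta$ by recursion over a strictly dense sequence using local trace-preserving embeddings of the hyperfinite fibers and a partition of unity, and then handle the image via Theorem~\ref{thm:cwf}---matches the paper's. But the one step you explicitly defer (``the careful bookkeeping of these inner conjugations'') is precisely the content of Lemma~\ref{lem:extension}, which is the technical heart of the paper's proof, and the mechanism there is quite different from what you sketch. The paper measures closeness not by approximate multiplicativity of globally defined completely positive maps but by the pseudo-metric $d$ on joint distributions, which by Jung's theorem depends only on the tracial data and not on the particular embedding; and it glues the local liftings $a^{\lambda_j}$ by compressing with $f_{k,n,j}^{1/2}$, where the $f_{k,n,j}$ come from continuously varying diagonal positive contractions $p_{[h_{j-1}(\lambda),h_j(\lambda)]}$ under an approximately central, approximately multiplicative embedding of $\IM_k$ into $C_\sigma(K,\cR)$. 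After conditioning onto the relative commutant of $\IM_k$, the glued element is an approximate direct sum of the local models weighted by the partition of unity, and $d^2$ behaves affinely under such weighted direct sums; that is what keeps the distributional error below $\delta$ while moving each $\theta_0(a)$ by less than $\e$ in $\|\cdot\|_{2,\ru}$. No continuous choice of conjugating unitaries is ever made.

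Your proposed substitute for this lemma has a genuine gap. Approximate unitary equivalence of trace-preserving embeddings gives, for each \emph{fixed} $\lambda$ in an overlap, \emph{some} unitary conjugating one finite family close to the other; it gives no continuity of that unitary in $\lambda$. So there is no continuous unitary ``matching cocycle'' to trivialize, and the contractibility of $\cU(\cR)$ cannot be brought to bear until such a continuous selection has been produced---which is essentially as hard as the theorem itself, and is exactly the kind of step where covering-dimension hypotheses normally enter (compare Corollary~\ref{cor:mcduff}). Similarly, placing each stage in a fresh tensor factor of $\cR$ does not by itself reconcile the stage-$n$ and stage-$(n+1)$ local models: you still need the quantitative one-step statement that the new map can be chosen within $\e$ of the old one in $\|\cdot\|_{2,\ru}$ while improving the distributional error to an arbitrary $\delta$. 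The final paragraph on the image is correct and agrees with the paper, but the proof as proposed is incomplete without Lemma~\ref{lem:extension} or a worked-out replacement for it.
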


Recall the fact that if $(A,\tau)$ is a separable hyperfinite
von Neumann algebra with a distinguished tracial state, then
a trace-preserving embedding of $A$ into the tracial ultra\-power $\cR^\omega$
of the hyperfinite $\mathrm{II}_1$ factor is unique up to unitary conjugacy
(see \cite{jung}). For every $n$-tuples $x_1,\ldots,x_n \in P$
and $y_1,\ldots,y_n \in Q$ in hyperfinite $\mathrm{II}_1$ factors $P$ and $Q$,
we define
\[
d(\{x_i\}_{i=1}^n,\{y_i\}_{i=1}^n) = \inf_{\pi,\rho}\max_i\|\pi(x_i)-\rho(y_i)\|_2,
\]
where the infimum runs over all trace-preserving embeddings
of $P$ and $Q$ into $\cR^\omega$.
Then, $d$ is a pseudo-metric and it depends on $(W^*(\{x_1,\ldots,x_n\}),\tau)$,
i.e., the joint distribution of $\{x_1,\ldots,x_n\}$ with respect to $\tau_P$,
rather than the specific embedding of $W^*(\{x_1,\ldots,x_n\})$ into $P$.
Once $*$-iso\-mor\-phisms $P\cong Q\cong \cR$ are fixed,
$P$ and $Q$ are embedded into $\cR^\omega$ as constant sequences and
\[
d(\{x_i\}_{i=1}^n,\{y_i\}_{i=1}^n) = \inf_{U\in\cU(\cR^\omega)}\max_i\|\Ad_U(x_i)-y_i\|_2.
\]
It follows that
\[
d(\{x_i\}_{i=1}^n,\{y_i\}_{i=1}^n) = \inf_\pi \max_i \| \pi(x_i) - y_i \|_2,
\]
where infimum runs over all trace-preserving $*$-homo\-mor\-phisms $\pi$ from $W^*(\{x_1,\ldots,x_n\})$ into $Q$,
or over all $*$-iso\-mor\-phisms $\pi$ from $P$ onto $Q$.
If $M$ is a continuous quasi-$\mathrm{W}^*$-bundle, then for every $a_1,\ldots,a_n\in M$, the map
\[
K\ni\lambda\mapsto\{\pi_\lambda(a_i)\}_{i=1}^n
\]
is continuous with respect to $d$.

\begin{lem}\label{lem:extension}
Let $N=C_\sigma(K,\cR)$ or any other continuous $\mathrm{W}^*$-bundle
over $K$ such that $\ev_\lambda(N)\cong\cR$ for every $\lambda\in K$
and such that for every $k\in\IN$ there is an approximately central approximately
multiplicative embedding of $\IM_k$ into $N$.
Let $M$ be a continuous quasi-$\mathrm{W}^*$-bundle over $K$ such that
$\pi_\lambda(M)''$ is hyperfinite for every $\lambda\in K$, and
let $F_0\subset F_1$ be finite subsets in the unit ball of $M$ and $\e>0$.
Assume that there is a map $\theta_0$ from $F_0$ into the unit ball of $N$ such that
\[
\sup_{\lambda\in K} d(\{\pi_\lambda(a)\}_{a\in F_0},\{\ev_\lambda(\theta_0(a))\}_{a\in F_0})<\e.
\]
Then, for every $\delta>0$, there is a map $\theta_1$ from $F_1$ into the unit ball
of $N$ such that
\[
\sup_{\lambda\in K} d(\{\pi_\lambda(a)\}_{a\in F_1},\{\ev_\lambda(\theta_1(a))\}_{a\in F_1})<\delta
\]
and
\[
\max_{a\in F_0} \|\theta_1(a) - \theta_0(a)\|_{2,\ru}<\e.
\]
\end{lem}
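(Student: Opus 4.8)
The plan is to prove the lemma by a local-to-global argument: first construct, for each point $\lambda_0\in K$, a local lift defined on a neighborhood that extends the coordinatization $\theta_0$ across the enlarged set $F_1$, and then patch these local lifts into a single $\theta_1$ by a partition-of-unity interpolation that exploits the approximately central approximately multiplicative matrix subalgebras of $N$. The two desiderata pull in opposite directions, and keeping them compatible is the whole content of the argument.

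For the local step, fix $\lambda_0\in K$. Since $\ev_{\lambda_0}(N)\cong\cR$ and the $d$-distance over $F_0$ is $<\e$, the description of $d$ recalled before the lemma furnishes a trace-preserving $*$-homomorphism $\sigma$ from $W^*(\{\pi_{\lambda_0}(a):a\in F_0\})$ into $\ev_{\lambda_0}(N)$ with $\max_{a\in F_0}\|\sigma(\pi_{\lambda_0}(a))-\ev_{\lambda_0}(\theta_0(a))\|_2<\e$. To extend the coordinatization to $F_1$, I use uniqueness up to unitary conjugacy of trace-preserving embeddings of a separable hyperfinite algebra into $\cR^\omega$: fixing any trace-preserving embedding of $W^*(\{\pi_{\lambda_0}(a):a\in F_1\})$ into $\cR^\omega$ and composing with a suitable $\Ad_U$, $U\in\cU(\cR^\omega)$, I obtain an embedding whose restriction to $W^*(\{\pi_{\lambda_0}(a):a\in F_0\})$ equals $\sigma$ inside $\cR^\omega$. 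Representing this embedding by sequences in $\cR$ and evaluating along $\omega$, I obtain elements $y_a\in\ev_{\lambda_0}(N)$ of the unit ball, with $y_a=\sigma(\pi_{\lambda_0}(a))$ for $a\in F_0$ and $d(\{\pi_{\lambda_0}(a)\}_{a\in F_1},\{y_a\}_{a\in F_1})$ as small as desired. Lifting each $y_a$ through $\ev_{\lambda_0}$ to the unit ball of $N$ and using that $\lambda\mapsto\ev_\lambda(\,\cdot\,)$ is $\|\cdot\|_2$-continuous while $\lambda\mapsto\{\pi_\lambda(a)\}$ is $d$-continuous (as noted before the lemma), I find a neighborhood $O_{\lambda_0}$ and a local lift $\{b^{(\lambda_0)}_a\}_{a\in F_1}$ in the unit ball of $N$ with
\[
\sup_{\lambda\in O_{\lambda_0}}d(\{\pi_\lambda(a)\}_{a\in F_1},\{\ev_\lambda(b^{(\lambda_0)}_a)\}_{a\in F_1})<\delta
\]
and $\sup_{\lambda\in O_{\lambda_0}}\max_{a\in F_0}\|\ev_\lambda(b^{(\lambda_0)}_a)-\ev_\lambda(\theta_0(a))\|_2<\e$.

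By compactness I extract a finite subcover $O_{\lambda_1},\dots,O_{\lambda_L}$ with a subordinate partition of unity in $C(K)\subset\cZ(N)$, and glue the local lifts inductively, adjoining one cover set at a time. The naive convex combination $\sum_i g_i b^{(\lambda_i)}_a$ averages distinct elements and destroys the fiberwise joint distribution on overlaps, ruining the $d$-bound; the distribution can be preserved only by interpolating through unitary conjugation, which, however, would also move the $F_0$-coordinates and spoil $\|\theta_1(a)-\theta_0(a)\|_{2,\ru}<\e$. The resolution, and the point where the hypothesis on $N$ is used, is to implement every transition inside an approximately central approximately multiplicative matrix subalgebra $\IM_k\hookrightarrow N$. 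Over an overlap the two local lifts carry the same fiberwise joint distribution up to $\delta$, hence are approximately unitarily conjugate; after arranging (by absorbing the genuinely new $F_1\setminus F_0$ coordinates into an approximately central copy) that the conjugation be implemented by a unitary of $\IM_k$, I join that unitary to $1$ by a norm-continuous path $w_t$ lying in $\IM_k$ and drive $t$ by a continuous cutoff $t(\lambda)$ supported in the newly adjoined set. The resulting $\lambda\mapsto w_{t(\lambda)}\,\ev_\lambda(b)\,w_{t(\lambda)}^*$ is a fiberwise \emph{unitary} conjugate of a good local lift, so the bound $d<\delta$ survives; and because $w_t$ lies in a subalgebra approximately commuting with $\theta_0(F_0)$, the conjugation approximately fixes the $F_0$-coordinates, so $\|\theta_1(a)-\theta_0(a)\|_{2,\ru}<\e$ persists. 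Continuity in $\lambda$ and norm-boundedness, together with completeness of the unit ball of $N$ (or directly the $C_\sigma$ description), yield the required $\theta_1$.

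The hard part is exactly this reconciliation: preserving the per-fiber joint distribution forces unitary conjugation, while preserving $\|\cdot\|_{2,\ru}$-proximity to $\theta_0$ on $F_0$ forbids it, and the two demands become compatible only because the approximately central matrix subalgebras allow the conjugations to be chosen approximately commuting with the already-fixed coordinates $\theta_0(F_0)$, while still providing the room to realize the change of the new coordinates across overlaps. Quantitatively, the remaining work is bookkeeping: one must track how the defects $\e$ and $\delta$, together with the approximate-centrality and approximate-multiplicativity errors of $\IM_k$, propagate through the $L$ successive gluings, and choose $k$ large and the matrix embedding good enough at the outset so that the accumulated error stays below the prescribed $\e$ and $\delta$.
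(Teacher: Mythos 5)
Your local step coincides with the paper's: at each $\lambda$ you use uniqueness up to unitary conjugacy of trace-preserving embeddings into $\cR^\omega$ to extend the $F_0$-coordinatization to $F_1$, lift to the unit ball of $N$, and shrink to a neighborhood $O_\lambda$ on which both estimates hold. The gap is in the gluing. You propose to reconcile two local lifts over an overlap by conjugating with a path $w_t$ of unitaries lying in an approximately central copy of $\IM_k$. But a unitary $u$ that approximately commutes with $N$ satisfies $\|ubu^*-b\|_{2,\ru}=\|[u,b]\|_{2,\ru}\approx 0$ for every $b$ in the unit ball, i.e.\ $\Ad_u$ is approximately the identity in the only metric that matters here; so no such unitary can carry one local lift of the genuinely new coordinates $F_1\setminus F_0$ to a different one. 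Conversely, the unitary of the fiber that actually relates two coordinatizations agreeing on $F_0$ up to $2\e$ is in general far from central, and a path joining it to $1$ need not approximately fix the $F_0$-coordinates at intermediate times (for a projection $p$, $u=\exp(2\pi ip)=1$ commutes with everything while $\exp(\pi ip)=1-2p$ need not approximately commute with anything). The phrase ``absorbing the genuinely new $F_1\setminus F_0$ coordinates into an approximately central copy'' is where the argument breaks: those coordinates are prescribed by $M$ and cannot be placed in a subalgebra that approximately commutes with $N$.

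The paper resolves the tension you correctly identified not by conjugation but by an approximate \emph{orthogonal direct sum}. From the approximately central approximately multiplicative $\p_{k,n}\colon\IM_k\to N$ it manufactures positive elements $f_{k,n,j}$, approximately pairwise orthogonal and approximately central, with $\sum_j f_{k,n,j}\approx1$ and fiberwise trace $\approx g_j(\lambda)$ (images of the diagonal contractions $p_{[h_{j-1}(\lambda),h_j(\lambda)]}$), and sets $\theta_1(a)=\sum_j f_{k,n,j}^{1/2}a^{\lambda_j}f_{k,n,j}^{1/2}$. In each fiber this is approximately a block-diagonal element of $\IM_k\bar\otimes(\psi_{k,n}^\tau(\IM_k)'\cap\ev_\tau(N))$ whose joint distribution over $F_1$ is the $g_j(\tau)$-weighted mixture of those of the blocks, up to $O(1/k)$; since $d(\,\cdot\,,\{\pi_\tau(a)\})^2$ is approximately affine under such tracially weighted direct sums, the bound $\delta$ survives, and since $\sum_jf_{k,n,j}\approx1$ while each $a^{\lambda_j}$ is $\e$-close to $\theta_0(a)$ in $\|\cdot\|_{2,\tau}$ on the support of $g_j$, the $F_0$-proximity survives as well. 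In short, the approximately central matrix algebra is used to host the partition of unity, not transition unitaries; replacing your unitary-path interpolation by this cut-down direct sum is the missing idea.
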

Here the symbol $\ev_\lambda$, instead of $\pi_\lambda$, is used for the $N$ side
to make a distinction from the $M$ side.
\begin{proof}
For each $\lambda$, there is a trace-preserving embedding
$\rho_\lambda\colon\pi_\lambda(M)\to\ev_\lambda(N)$.
By the remarks preceding this lemma, we may assume that
\[
\max_{a\in F_0}\|\rho_\lambda(\pi_\lambda(a)) - \ev_\lambda(\theta_0(a))\|_2<\e.
\]
For each $a\in F_1$, we lift $(\rho_\lambda\circ\pi_\lambda)(a)\in\ev_\lambda(N)$
to $a^\lambda\in N$ with $\|a^\lambda\|\le1$.
There is a neighborhood $O_\lambda$ of $\lambda$ such that
$\tau\in O_\lambda$ implies
\[
d(\{\pi_\tau(a)\}_{a\in F_1},\{\ev_\tau(a^\lambda)\}_{a\in F_1})<\delta
\]
and
\[
\max_{a\in F_0}\| \ev_\tau(a^\lambda) - \ev_\tau(\theta_0(a)) \|_2<\e.
\]
By compactness, $K$ is covered by a finite family $\{ O_{\lambda_j}\}$.
Take a partition of unity $g_j\in C(K)$ subordinated by $\{ O_{\lambda_j}\}$.
Let $h_0=0$ and $h_j=\sum_{i=1}^{j} g_i$.
For each $k$, take an approximately central approximately
multiplicative embedding $\p_{k,n}$ of $\IM_k$ into $N$.
Since the closed unit ball of $\IM_k$ is norm-compact, one has
\[
\forall a\in N\quad\limsup_n\,\sup\{ \|[\p_{k,n}(x),a]\|_{2,\ru} : x\in\IM_k,\,\|x\|\le1\}=0.
\]
For $t\in[0,1]$, we define $p_t\in\IM_k$ to be
$\mathrm{diag}(1,\ldots,1,t-\lfloor t\rfloor,0,\ldots,0)$,
with $1$s in the first $\lfloor t\rfloor$ diagonal entries,
$t-\lfloor t\rfloor$ in the $(\lfloor t\rfloor+1)$-th entry, and $0$s in the rest.
It follows that $t\mapsto p_t$ is continuous, $0\le p_t\le 1$, $\tr(p_t)=t$,
and $\tau(p_t-p_t^2)\le(4k)^{-1}$. We write $p_{[s,t]}=p_t-p_s$.
With the help of Theorem \ref{thm:cwf}, we define $f_{k,n,j}\in N$ to be the element such that
\[
\ev_\lambda(f_{k,n,j}) = \ev_\lambda(\p_{k,n}( p_{[h_{j-1}(\lambda),h_j(\lambda)]} )).
\]
For $a\in F_1$, we define $\theta^{k,n}_1(a)\in N$ by
$\theta^{k,n}_1(a) = \sum_j f_{k,n,j}^{1/2}a^{\lambda_j}f_{k,n,j}^{1/2}$.
Since $F_1':=F_1\cup\{a^{\lambda_j} : a\in F_1,\,j\}$ is finite,
it is not too hard to see
\begin{align*}
\limsup_k\limsup_n\max_{a\in F_0} \|\theta^{k,n}_1(a) - \theta_0(a)\|_{2,\ru}
< \e.
\end{align*}
It remains to estimate
\[
d(\{\pi_\tau(a)\}_{a\in F_1},\{\ev_\tau(\theta^{k,n}_1(a))\}_{a\in F_1}).
\]
Let $k$ be fixed for the moment.
Since $(\p_{k,n})_n$ is approximately multiplicative, there are unital $*$-homo\-mor\-phisms
$\psi_{k,n}^\tau\colon \IM_k\to\ev_\tau(N)$ such that
\[
\limsup_n\sup_\tau\sup_{x\in\IM_k,\,\|x\|\le1}\|\ev_\tau\circ\p_{k,n}(x) - \psi_{k,n}^\tau (x)\|_2=0.
\]
Let $E_{k,n}^\tau$ be the trace-preserving
conditional expectation from $\ev_\tau(N)$ onto the relative commutant
$\psi_{k,n}^\tau(\IM_k)'\cap\ev_\tau(N)$,
which is given by $E_{k,n}^\tau(b)=|G|^{-1}\sum_{u\in G} \psi_{k,n}^\tau(u) b \psi_{k,n}^\tau(u)^*$
for the group $G$ of permutation matrices in $\cU(\IM_k)$.
It follows that
\[
\limsup_n\sup_\tau\| \ev_\tau(b) - E_{k,n}^\tau(\ev_\tau(b))\|_2=0
\]
for every $b\in N$. This implies
\[
\limsup_n \sup_{j,\,\tau\in O_{\lambda_j}}
d(\{\pi_\tau(a)\}_{a\in F_1},\{E_{k,n}^\tau(\ev_\tau(a^{\lambda_j}))\}_{a\in F_1})<\delta,
\]
\[
\limsup_n \sup_{j,\,\tau\in O_{\lambda_j}}
d(\{\ev_\tau(a^{\lambda_j})\}_{a\in F_1},\{E_{k,n}^\tau(\ev_\tau(a^{\lambda_j}))\}_{a\in F_1})=0,
\]
and
\[
\limsup_n \sup_{j,\,\tau\in O_{\lambda_j}}
d(\{\ev_\tau(\theta^{k,n}_1(a))\}_{a\in F_1},
\{\sum_j \psi_{k,n}^\tau(p_{[h_{j-1}(\lambda),h_j(\lambda)]}) E_{k,n}^\tau(\ev_\tau(a^{\lambda_j}))\}_{a\in F_1})=0.
\]
If we view $\ev_\tau(N)=\IM_k(\psi_{k,n}^\tau(\IM_k)'\cap\ev_\tau(N))$,
then $a'=E_{k,n}^\tau(\ev_\tau(a))$
looks like $\mathrm{diag}(a',a',\ldots,a')$,
and $\psi_{k,n}^\tau(p_t)$ looks like $\mathrm{diag}(1,\ldots,1,t-\lfloor t\rfloor,0\ldots,0)$.
Hence, one has
\begin{align*}
&\sup_\tau d(\{\pi_\tau(a)\}_{a\in F_1},
 \{\sum_j \psi_{k,n}^\tau(p_{[h_{j-1}(\lambda),h_j(\lambda)]}) E_{k,n}^\tau(\ev_\tau(a^{\lambda_j}))\}_{a\in F_1})^2\\
&\quad< \frac{2|\{O_{\lambda_j}\}|}{k}+\sum_j g_j(\tau) d(\{\pi_\tau(a)\}_{a\in F_1},
  \{E_{k,n}^\tau(\ev_\tau(a^{\lambda_j}))\}_{a\in F_1})^2.
\end{align*}
Altogether, one has
\[
\limsup_k\limsup_n \sup_\tau d(\{\pi_\tau(a)\}_{a\in F_1},\{\ev_\tau(\theta^{k,n}_1(a))\}_{a\in F_1})<\delta.
\]
Therefore, for some $k,n$, the map $\theta_1=\theta^{k,n}_1$ satisfies the desired properties.
\end{proof}

\begin{proof}[Proof of Theorem \ref{thm:emb}]
Let $(a_n)_{n=1}^\infty$ be a strictly dense sequence in the unit ball of $M$.
We use Lemma \ref{lem:extension} recursively and obtain sequences
$(\{\theta_n(a_i)\}_{i=1}^n)_{n=1}^\infty$ in $C_\sigma(K,\cR)$ such that
\[
\sup_\lambda d(\{\ev_\lambda(\theta_n(a_i))\}_{i=1}^n, \{\pi_\lambda(a_i)\}_{i=1}^n)<2^{-n}
\]
and
\[
\max_{i=1,\ldots,n-1}\|\theta_n(a_i) - \theta_{n-1}(a_i)\|_{2,\ru}<2^{-(n-1)}.
\]
Then, each sequence $(\theta_n(a_i))_{n=i}^\infty$ converges to an element $\theta(a_i)\in C_\sigma(K,\cR)$.
The map $\theta$ extends to a $*$-homo\-mor\-phism from $M$ into $C_\sigma(K,\cR)$, and
$\ev_\lambda\circ\theta$ factors through $\pi_\lambda$. This proves the first assertion.
The second follows from Theorem \ref{thm:cwf}.
\end{proof}

We give a criterion for a continuous $\cR$-bundle
to be a trivial bundle.

\begin{thm}\label{thm:crf}
Let $M$ be a strictly separable continuous $\mathrm{W}^*$-bundle over
$K$ such that $\pi_\lambda(M)\cong\cR$ for every $\lambda\in K$.
Then, the following are equivalent.
\begin{enumerate}[$(\rm i)$]
\item\label{con:tfld}
$M\cong C_\sigma(K,\cR)$ as a continuous $\mathrm{W}^*$-bundle.
\item\label{con:ctrs1}
There is a sequence $(p_n)_n$ in $M$ such that $0\le p_n\le1$,
$\| p_n - p_n^2\|_{2,\ru}\to0$,
$\|E(p_n)-1/2\|\to0$, and $\|[ p_n, a ]\|_{2,\ru}\to0$ for all $a\in M$.
\item\label{con:ctrs2}
For every $k$, there is an approximately central approximately multiplicative
embedding of $\IM_k$ into $M$.
\end{enumerate}
\end{thm}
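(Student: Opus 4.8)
The plan is to prove $({\rm\ref{con:tfld}})\Rightarrow({\rm\ref{con:ctrs2}})$, $({\rm\ref{con:ctrs2}})\Rightarrow({\rm\ref{con:ctrs1}})$, $({\rm\ref{con:ctrs1}})\Rightarrow({\rm\ref{con:ctrs2}})$, and $({\rm\ref{con:ctrs2}})\Rightarrow({\rm\ref{con:tfld}})$. For $({\rm\ref{con:tfld}})\Rightarrow({\rm\ref{con:ctrs2}})$ I would argue directly on the trivial bundle, avoiding Corollary~\ref{cor:mcduff} so as not to import a covering-dimension hypothesis. Fix $k$ and, using $\cR\cong\cR\bar\otimes\cR$, pick unital embeddings $\sigma_n\colon\IM_k\to\cR$ that are approximately central, i.e.\ $\|[\sigma_n(x),b]\|_2\to0$ for each $b\in\cR$. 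The constant sections $\p_n(x)(\lambda)=\sigma_n(x)$ define genuine $*$-homo\-mor\-phisms $\p_n\colon\IM_k\to C_\sigma(K,\cR)$, so only approximate centrality is at issue. For $a\in C_\sigma(K,\cR)$ the set $\{a(\lambda):\lambda\in K\}$ is norm-bounded and, by continuity and compactness of $K$, relatively compact in $L^2(\cR,\tau_\cR)$; a finite-net argument then upgrades the pointwise bound $\|[\sigma_n(x),a(\lambda)]\|_2\to0$ to one uniform in $\lambda$, giving $\|[\p_n(x),a]\|_{2,\ru}\to0$.

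The implication $({\rm\ref{con:ctrs2}})\Rightarrow({\rm\ref{con:ctrs1}})$ is immediate: apply $({\rm\ref{con:ctrs2}})$ with $k=2$ to obtain $\p_n\colon\IM_2\to M$ and set $p_n=\p_n(e_{11})$. Then $0\le p_n\le1$ since $\p_n$ is unital and positive, $\|p_n-p_n^2\|_{2,\ru}\to0$ by approximate multiplicativity (as $e_{11}^2=e_{11}$), and $\|[p_n,a]\|_{2,\ru}\to0$ by approximate centrality. For the trace condition, approximate centrality and multiplicativity force $\p_n(u)^*\p_n(x)\p_n(u)-\p_n(x)\to0$ in $\|\cdot\|_{2,\ru}$ for unitaries $u\in\IM_2$, so $E\circ\p_n$ is asymptotically conjugation-invariant, hence asymptotically the tracial state on $\IM_2$; thus $\|E(p_n)-1/2\|\to0$.

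For $({\rm\ref{con:ctrs2}})\Rightarrow({\rm\ref{con:tfld}})$ I would run a two-sided approximate intertwining. Condition $({\rm\ref{con:ctrs2}})$ is exactly the hypothesis that allows Lemma~\ref{lem:extension} to be applied with $N=M$, while the trivial bundle satisfies the same hypothesis by the already-proved $({\rm\ref{con:tfld}})\Rightarrow({\rm\ref{con:ctrs2}})$, so Lemma~\ref{lem:extension} applies with $N=C_\sigma(K,\cR)$ as well. Alternately constructing maps $M\to C_\sigma(K,\cR)$ and $C_\sigma(K,\cR)\to M$ along strictly dense sequences in the two unit balls, with the $d$-distances halved and the partial maps kept Cauchy in $\|\cdot\|_{2,\ru}$ at each stage (as in the proof of Theorem~\ref{thm:emb}), yields mutually inverse trace-preserving $C(K)$-linear $*$-homo\-mor\-phisms, i.e.\ an isomorphism $M\cong C_\sigma(K,\cR)$ of continuous $\mathrm{W}^*$-bundles. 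A one-sided appeal to Theorem~\ref{thm:emb} does not suffice here, since the embeddings $\iota_\lambda\colon\pi_\lambda(M)\cong\cR\hookrightarrow\cR$ it produces need not be surjective; the back-and-forth is precisely what makes them onto.

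The main obstacle is $({\rm\ref{con:ctrs1}})\Rightarrow({\rm\ref{con:ctrs2}})$, a bundle analogue of McDuff's theorem. Form the uniform central sequence algebra $F(M)=M_\omega\cap M'$, where $M_\omega$ is the ultra\-power of $M$ in $\|\cdot\|_{2,\ru}$ and $M$ sits inside as constant sequences; then $E$ induces a faithful $C(K)$-valued conditional expectation $E^\omega$ on $F(M)$. Condition $({\rm\ref{con:ctrs1}})$ yields a projection $p=[(p_n)]\in F(M)$ with $E^\omega(p)=\tfrac12\cdot1$, which is therefore not central in $F(M)$ (a central projection $q$ would satisfy $E^\omega(q)=q$); so $F(M)$ is fibrewise non-commutative. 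It remains to manufacture a partial isometry $v\in F(M)$ with $v^*v=p$ and $vv^*=1-p$, giving a unital $\IM_2\subset F(M)$, and then to iterate inside corners to reach $\IM_{2^m}$ and, by approximation, every $\IM_k$---which is $({\rm\ref{con:ctrs2}})$. Fibrewise the equivalence $\pi_\lambda(p)\sim 1-\pi_\lambda(p)$ is clear, since each $\pi_\lambda(M)\cong\cR$ is McDuff and the equivalence takes place in the factor $\cR_\omega\cap\cR'$; the hard part, which I expect to be the crux, is to perform this Murray--von Neumann equivalence \emph{uniformly} in $\lambda$. Concretely, one must select the connecting partial isometries to depend continuously on $\lambda$ and assemble them into a genuine element of $F(M)$ via the continuous-section realization of Theorem~\ref{thm:cwf}, patched with a partition of unity as in Corollary~\ref{cor:mcduff}; the uniform convergence $\|E(p_n)-1/2\|\to0$ in $({\rm\ref{con:ctrs1}})$ is exactly what forces the fibrewise data to cohere into such a section.
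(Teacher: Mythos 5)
Your treatment of $(\mathrm{i})\Rightarrow(\mathrm{iii})$, $(\mathrm{iii})\Rightarrow(\mathrm{ii})$, and $(\mathrm{iii})\Rightarrow(\mathrm{i})$ is sound, and the last of these---the two-sided approximate intertwining, applying Lemma \ref{lem:extension} alternately with $N=M$ and $N=C_\sigma(K,\cR)$ along strictly dense sequences---is exactly the paper's argument. The problem is $(\mathrm{ii})\Rightarrow(\mathrm{iii})$, which you rightly call the crux but do not actually prove. Two things go wrong in your sketch. First, the claim that a central projection $q$ of $F(M)=M_\omega\cap M'$ must satisfy $E^\omega(q)=q$ is unjustified; nothing forces $\cZ(F(M))$ to lie in $C(K)$. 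Second, and more seriously, manufacturing a partial isometry $v\in F(M)$ with $v^*v=p$ and $vv^*=1-p$ ``uniformly in $\lambda$'' is not a routine patching problem: the fibrewise equivalences live in $\cR_\omega\cap\cR'$, there is no continuity structure on $\lambda\mapsto(\pi_\lambda(M))_\omega\cap\pi_\lambda(M)'$ that lets you select them coherently, and Theorem \ref{thm:cwf} applies to sections of $M$, not to elements of an ultrapower. Even granting the partial isometry, passing from one copy of $\IM_2$ in $F(M)$ to $\IM_k$ for all $k$ is the content of McDuff's theorem, whose bundle version is precisely what is at stake.

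The paper's route avoids all of this, and the key idea is one your plan does not contain: since every fiber is a factor, the Dixmier approximation theorem (together with the proof of Theorem \ref{thm:ast}) lets one approximate $E(a)$ in $\|\cdot\|_{2,\ru}$ by averages of unitary conjugates of $a$, whence the central sequence is asymptotically $E$-independent of $M$, i.e.\ $\|E(p_na)-E(p_n)E(a)\|\to0$. Choosing $m$ members of the sequence sufficiently spread out and forming the $2^m$ positive elements $q_\nu=r_1^{1/2}\cdots r_{m-1}^{1/2}r_m r_{m-1}^{1/2}\cdots r_1^{1/2}$ with $r_i\in\{p_i,1-p_i\}$ gives an approximately central, approximately multiplicative copy of $\ell_\infty(\{0,1\}^m)$ with $E(q_\nu)\approx 2^{-m}$. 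This approximately central, approximately projective partition of unity then replaces both the $C(K)$-partition of unity and the covering-dimension hypothesis of Corollary \ref{cor:mcduff} when gluing local lifts of $\IM_k$-embeddings into the McDuff fibers $\pi_\lambda(M)\cong\cR$, as in the proof of Lemma \ref{lem:extension}. No Murray--von Neumann equivalence between $p$ and $1-p$ is ever constructed.
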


\begin{proof}
The implication $(\mathrm{\ref{con:tfld}})\Rightarrow(\mathrm{\ref{con:ctrs1}})$ is obvious.
For $(\mathrm{\ref{con:ctrs1}})\Rightarrow(\mathrm{\ref{con:ctrs2}})$, we
we observe that since $\pi_\lambda(M)$'s are all factors,
the central sequence $(p_n)_n$ satisfies $\|E(p_na)-E(p_n)E(a)\|\to0$ for every $a\in M$.
Indeed, let $a\in M$ and $\e>0$ be given.
By the Dixmier approximation theorem and the proof of Theorem \ref{thm:ast},
there are $u_1,\ldots,u_k\in \cU(M)$
such that $\|E(a) - \frac{1}{k}\sum_{i=1}^k u_iau_i^*\|_{2,\ru}<\e$. It follows that
\begin{align*}
\limsup_{n\to\infty}\|E(p_n)E(a) - E(p_na)\|
 &= \limsup_{n\to\infty} \| E(p_nE(a)) - \frac{1}{k}\sum_{i=1}^k E(u_ip_nau_i^*) \|\\
 &= \limsup_{n\to\infty} \|E(p_n(E(a) - \frac{1}{k}\sum_{i=1}^ku_iau_i^*))\|\\
 &<\e.
\end{align*}
Let $m\in\IN$ be arbitrary. For a given finite sequence $(p_n)_{n=1}^m$, $0\le p_i\le 1$,
and $\nu\in\{0,1\}^m$, we define $q_\nu\in M$ by
\[
q_\nu=r_1^{1/2}\cdots r_{m-1}^{1/2} r_m r_{m-1}^{1/2}\cdots r_1^{1/2} \in M,
\]
where $r_i=p_i$ or $1-p_i$ depending on $\nu(i)\in\{0,1\}$.
We note that $q_\nu\geq0$ and $\sum q_\nu=1$.
By choosing $(p_n)_{n=1}^m$ appropriately, we obtain an approximately central
approximately multiplicative embedding of $\ell_\infty(\{0,1\}^m)$ into $M$.
Now, condition $(\mathrm{\ref{con:ctrs2}})$ follows by choosing at the local level
approximately central approximately multiplicative embeddings
of $\IM_k$ into $\pi_\lambda(M)$ and glue them together, as in the proof of Lemma \ref{lem:extension},
by an approximately central approximately projective partition of unity.

The proof of $(\mathrm{\ref{con:ctrs2}})\Rightarrow(\mathrm{\ref{con:tfld}})$
is similar to that of Theorem \ref{thm:emb}.
Let $(a_n)_{n=1}^\infty$ (resp.\ $(b_n)_{n=1}^\infty$) be a strictly dense sequence
in the unit ball of $M$ (resp.\ $C_\sigma(K,\cR)$).
We recursively construct finite subsets $F_1\subset F_2\subset\cdots$ of $M$
and maps $\theta_n\colon F_n\to C_\sigma(K,\cR)$ such that
$\{a_1,\ldots,a_n\}\subset F_n$,
\[
\sup_\lambda d(\{\ev_\lambda(\theta_n(a))\}_{a\in F_n}, \{\pi_\lambda(a)\}_{a\in F_n})<2^{-n},
\]
\[
\max_{a\in F_{n-1}}\|\theta_n(a) - \theta_{n-1}(a)\|_{2,\ru}<2^{-(n-1)},
\]
and $\{b_1,\ldots,b_n\}\subset\theta_n(F_n)$.
Let $F_0=\emptyset$ and suppose that we have constructed up to $n-1$.
Let $F_n'=F_{n-1}\cup\{a_n\}$.
We use Lemma \ref{lem:extension} and obtain a map $\theta_n'\colon F_n'\to C_\sigma(K,\cR)$
such that
\[
\sup_\lambda d(\{\ev_\lambda(\theta_n'(a))\}_{a\in F_n'}, \{\pi_\lambda(a)\}_{a\in F_n'})<2^{-(n+1)}
\]
and
\[
\max_{a\in F_{n-1}}\|\theta_n'(a) - \theta_{n-1}(a)\|_{2,\ru}<2^{-(n-1)}.
\]
We may assume that $\theta_n'$ is injective and $\theta_n'(F_n')$ does
not contain any of $b_1,\ldots,b_n$.
We use Lemma \ref{lem:extension} again but this time to
$\theta_n'(F_n')\subset \tilde{F}:=\theta_n'(F_n')\cup\{b_1,\ldots,b_n\}$ and $(\theta_n')^{-1}$.
Then, there is $\psi\colon \tilde{F}\to M$ such that
\[
\sup_\lambda d(\{\pi_\lambda(\psi(b))\}_{b\in\tilde{F}},\{\ev_\lambda(b)\}_{b\in\tilde{F}})<2^{-(n+1)}
\]
and
\[
\max_{a\in F_n'}\| a - \psi(\theta_n'(a))\|_{2,\ru}<2^{-(n+1)}.
\]
Now, we set $F_{n}=F_n'\cup\{\psi(b_1),\ldots\psi(b_n)\}$
(which can be assumed to be a disjoint union)
and define $\theta_{n}\colon F_{n}\to C_\sigma(K,\cR)$ by
$\theta_{n}=\theta_n'$ on $F_n'$ and $\theta_{n}(\psi(b_i))=b_i$.
One has
\begin{align*}
&\sup_\lambda d(\{\ev_\lambda(\theta_n(a))\}_{a\in F_n}, \{\pi_\lambda(a)\}_{a\in F_n}) \\
 &\quad\le\sup_\lambda (d(\{\ev_\lambda(b)\}_{b\in\tilde{F}}, \{\pi_\lambda(\psi(b))\}_{b\in\tilde{F}})
  +\max_{a\in F_n'}\| \pi_\lambda(\psi(\theta_n'(a))) - \pi_\lambda(a) \|_{2}) \\
 &\quad< 2^{-n}
\end{align*}
as desired. By taking the limit of $(\theta_n)_n$, one obtains a $*$-iso\-mor\-phism
$\theta$ from $M$ onto $C_\sigma(K,\cR)$.
\end{proof}

By combining Corollary \ref{cor:mcduff} and Theorem \ref{thm:crf}, one obtains
the following $\mathrm{W}^*$-analogue of Theorem 1.1 in \cite{dw}.
This also implies Theorem \ref{thm:stct}.
It is unclear whether the finite-dimensionality assumption is essential.
\begin{cor}
Let $M$ be a strictly separable continuous $\mathrm{W}^*$-bundle over $K$.
If every fiber $\pi_\lambda(M)$ is isomorphic to $\cR$ and $K$ has
finite covering dimension, then $M\cong C_\sigma(K,\cR)$ as a continuous $\mathrm{W}^*$-bundle.
\end{cor}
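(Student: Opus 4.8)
The plan is to reduce everything to the two results just established and merely to verify that their hypotheses are met. The key observation is that the hyperfinite $\mathrm{II}_1$ factor is McDuff: since $\cR\cong\cR\otimes\cR$, each fiber $\pi_\lambda(M)\cong\cR$ absorbs $\cR$ tensorially and hence has the McDuff property. Together with the standing assumption that $K$ has finite covering dimension, this puts us exactly in the setting of Corollary~\ref{cor:mcduff}.

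First I would invoke Corollary~\ref{cor:mcduff} to produce, for every $k$, an approximately central approximately multiplicative embedding of $\IM_k$ into $M$. This is nothing but condition $(\mathrm{\ref{con:ctrs2}})$ of Theorem~\ref{thm:crf}. Since $M$ is strictly separable and every fiber is isomorphic to $\cR$, Theorem~\ref{thm:crf} applies verbatim, and its implication $(\mathrm{\ref{con:ctrs2}})\Rightarrow(\mathrm{\ref{con:tfld}})$ delivers a $*$-iso\-mor\-phism $M\cong C_\sigma(K,\cR)$ of continuous $\mathrm{W}^*$-bundles, which is the desired conclusion.

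There is no genuine obstacle remaining at this stage: the substantive content has already been absorbed into the two cited results---the order-zero map and partition-of-unity argument over the finite-dimensional base $K$ in Corollary~\ref{cor:mcduff}, and the recursive back-and-forth intertwining along strictly dense sequences in the proof of Theorem~\ref{thm:crf}. The only point worth spelling out is the McDuff property of the fibers, and even that is immediate from $\cR\cong\cR\otimes\cR$. Thus the entire proof is a matter of citing Corollary~\ref{cor:mcduff} and Theorem~\ref{thm:crf} in sequence.
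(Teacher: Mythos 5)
Your proposal is correct and follows exactly the route the paper intends: note that each fiber $\cR$ is McDuff (since $\cR\cong\cR\otimes\cR$), apply Corollary~\ref{cor:mcduff} using the finite covering dimension of $K$ to obtain condition $(\mathrm{\ref{con:ctrs2}})$ of Theorem~\ref{thm:crf}, and then conclude via the implication $(\mathrm{\ref{con:ctrs2}})\Rightarrow(\mathrm{\ref{con:tfld}})$. This is precisely the paper's argument (``By combining Corollary~\ref{cor:mcduff} and Theorem~\ref{thm:crf}\dots''), so nothing further is needed.
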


\end{document}